\newtheorem{theorem}{Theorem}[section]
\newtheorem{corollary}[theorem]{Corollary}
\newtheorem{proposition}[theorem]{Proposition}
\newtheorem{example}[theorem]{Example}
\newtheorem{lemma}[theorem]{Lemma}
\def\qed{\vbox{\hrule
 \hbox{\vrule\hbox to 5pt{\vbox to 8pt{\vfil}\hfil}\vrule}\hrule}}
\def\endproof{\unskip \nobreak \hskip0pt plus 1fill \qquad \qed \par}
\newcommand{\mc}{\mathcal}
\begin{document}

\title{Convex $(0,1)$-Matrices and Their Epitopes}

\author{
 Richard A. Brualdi\footnote{Department of Mathematics, University of Wisconsin, Madison, WI 53706, USA. {\tt brualdi@math.wisc.edu}} \\
 Geir Dahl\footnote{Department of Mathematics,  
 University of Oslo, Norway.
 {\tt geird@math.uio.no.} Corresponding author.}
 }
\date{12 August 2020}
\maketitle

\begin{abstract}  We investigate $(0,1)$-matrices that are {\em convex}, which means that the ones are consecutive in every row and column. These matrices occur in discrete tomography. The notion of ranked essential sets, known for permutation matrices, is extended to convex sets. We show a number of results for  the class $\mc{C}(R,S)$ of convex matrices with given row and column sum vectors $R$ and $S$.  Also, it is shown that the ranked essential set uniquely determines a matrix in $\mc{C}(R,S)$. 
 \end{abstract}


\noindent {\bf Key words.} Zero-one matrix, permutation matrix, convexity, essential set, polyomino.

\noindent
{\bf AMS subject classifications.} 05A05, 05B20, 15B36, 52A37.

\section{Introduction}

Mathematical reconstruction problems arise in the area of image analysis, and discrete tomography (\cite{Alpers17, Balazs07, RABDD1,DF}) is such a field where combinatorics plays a major role. A $(0,1)$-matrix then represents an image with black-white pixels, and the row and column sums count the number of black pixels in that line. Within this framework convex matrices, i.e., those with consecutive ones, are interesting. In this paper we study several questions for such convex matrices. In particular, we study so-called ranked essential sets for convex matrices.

A $(0,1)$-matrix $A$ is {\it row convex} provided the 1's in each row occur consecutively;  {\it column convex} is defined  analogously. The matrix $A$ is {\it convex} provided it is both row and column convex.
Permutation matrices are trivially convex. Let  $R=(r_1,r_2,\ldots,r_m)$ and $S=(s_1,s_2,\ldots,s_n)$ be nonnegative integral vectors with $\sum_{i=1}^m r_i=\sum_{j=1}^n s_j$. Let ${\mathcal A}(R,S)$ be the set of all $m\times n$ (0,1)-matrices with row sum vector
$R$ and column sum vector $S$.  The class of $m\times n$ convex $(0,1)$-matrices in ${\mathcal A}(R,S)$ is denoted by ${\mathcal C}_{m,n}(R,S)$ or, more simply as  ${\mathcal C}(R,S)$. It is an {\em NP}-complete problem to determine whether a class ${\mathcal C}_{m,n}(R,S)$ is nonempty \cite{DF,W}.

\begin{example}\label{ex:convex0} {\rm Below are convex matrices with $R=(4,1,1,2,1,1)$ and $S=(2,1,3,1,$ $1,1,1)$, and $R=(5,2,2,3,3,1)$ and $S=(2,2,5,4,1,1,1)$, respectively.
\[A_1=\left[
\begin{array}{c|c|c|c|c|c|c}
&1&1&1&1&&\\ \hline
&&1&&&&\\ \hline
&&1&&&&\\ \hline
&&&&&1&1\\ \hline
1&&&&&&\\ \hline
1&&&&&&\end{array}\right], 
\qquad
A_2=
\left[
\begin{array}{c|c|c|c|c|c|c}
&&1&1&1&1&1\\ \hline
&&1&1&&&\\ \hline
&&1&1&&&\\ \hline
&1&1&1&&&\\ \hline
1&1&1&&&&\\ \hline
1&&&&&&\end{array}\right].\]
}\hfill{$\Box$}
\end{example}

\begin{example}{\rm \label{ex:order}
This next example stresses that, not surprisingly,  the ordering of the components of $R$ and $S$ is important for the nonemptiness of ${\mathcal C}(R,S)$.
Let $R=S=(2,2,1)$. Then we have the convex matrices
\[\left[\begin{array}{c|c|c}
1&1&\\ \hline
1&1&\\ \hline
&&1\end{array}\right]\mbox{ and }
\left[\begin{array}{c|c|c}
&1&1\\ \hline
1&1&\\ \hline
1&&\end{array}\right]\mbox{ but also the nonconvex }
\left[\begin{array}{c|c|c} 1&&1\\ \hline 1&1&\\ \hline &1&\end{array}\right]. \]
Now let $R=S=(2,1,2)$. Then, as is easily checked,  ${\mathcal C}(R,S)=\emptyset$.
Note that the a matrix whose 1's form a Ferrers diagram (the maximal matrix of Ryser when $R$ and $S$ are nondecreasing) is convex.
}\hfill{$\Box$}\end{example}

As mentioned, it is an {\em NP}-complete problem to determine whether a class $\mc{C}(R,S)$ is nonempty, but a goal of this paper is to characterize this property in terms of $R$ and $S$ for certain subclasses  of $\mc{C}_{m,n}(R,S)$.

An {\it epitope}\footnote{The word `epitope' is a synonym for determinant, a word we avoid in the current context since it has a well-known and different meaning in mathematics.}
 of ${\mathcal A}(R,S)$ is any information that determines uniquely a  matrix that is known to be in ${\mathcal A}(R,S)$. Trivially the positions of all but one of the 1's of an $A\in {\mathcal A}(R,S)$ is an epitope of ${\mathcal A}(R,S)$. But knowing the position of all but two 1's is not an epitope in general, as seen with $R=(1,1)=S$ and the matrices
 \[\left[\begin{array}{cc} 1&0 \\ 0&1\end{array}\right] \mbox{ and }
 \left[\begin{array}{cc} 0&1 \\ 1&0\end{array}\right].\]
 In fact, any two matrices in ${\mc A}(R,S)$ can be obtained from one another by a sequence of {\it interchanges} which replace any $2\times 2$ submatrix equal to one of these with the other.
 
 Let $m=n$ and $R=S=(1,1,\ldots,1)$. Then ${\mathcal A}(R,S)$ is the set ${\mathcal P}_n$ of $n\times n$ permutation matrices.
In \cite{Fulton92} Fulton determined an epitope of ${\mathcal P}_n$  called the  ranked essential set obtained as follows.   For each 1 of $P \in \mc{P}_n$ shade (or delete) all the positions from the 1 and eastwards and from the 1 and southwards leaving the {\it diagram}   of $P$. The {\it essential set} of $P$ is the set of southeast corners of the connected components of the  unshaded squares of the diagram. For example, if 
\[P=\left[\begin{array}{c|c|c|c|c|c}
&1&&&&\\ \hline
&&&&1&\\ \hline
1&&&&&\\ \hline
&&&1&&\\ \hline
&&&&&1\\ \hline
&&1&&&
\end{array}\right],\]
then, using the above description, we have
\begin{equation}\label{eq:P}
P: \left|\begin{array}{c|c|c|c|c|c}
\hline
&\cellcolor[gray]{.9}1&\cellcolor[gray]{0.9}&\cellcolor[gray]{0.9}&\cellcolor[gray]{0.9}&\cellcolor[gray]{0.9}\\ \hline
\star&\cellcolor[gray]{0.9}&&\star&\cellcolor[gray]{0.9} 1&\cellcolor[gray]{0.9}\\ \hline
\cellcolor[gray]{0.9} 1&\cellcolor[gray]{0.9}&\cellcolor[gray]{0.9}&\cellcolor[gray]{0.9}&\cellcolor[gray]{0.9}&\cellcolor[gray]{0.9}  \\ \hline
\cellcolor[gray]{0.9}&\cellcolor[gray]{0.9}&&\cellcolor[gray]{0.9} 1&\cellcolor[gray]{0.9}&\cellcolor[gray]{0.9}\\ \hline
\cellcolor[gray]{0.9}&\cellcolor[gray]{0.9}&\star&\cellcolor[gray]{0.9}&\cellcolor[gray]{0.9}&\cellcolor[gray]{0.9}1\\ \hline
\cellcolor[gray]{0.9}&\cellcolor[gray]{0.9}\cellcolor[gray]{0.9}&\cellcolor[gray]{0.9} 1&\cellcolor[gray]{0.9}&\cellcolor[gray]{0.9}&\cellcolor[gray]{0.9} \\ \hline
 \end{array}\right|
 \rightarrow
 \left|\begin{array}{c|c|c|c|c|c}
\hline
\phantom{\star}&\cellcolor[gray]{.9}\ 1&\cellcolor[gray]{0.9}&\cellcolor[gray]{0.9}&\cellcolor[gray]{0.9}&\cellcolor[gray]{0.9}\\ \hline
0&\cellcolor[gray]{0.9}&&1&\cellcolor[gray]{0.9} 1&\cellcolor[gray]{0.9}\\ \hline
\cellcolor[gray]{0.9} 1&\cellcolor[gray]{0.9}&\cellcolor[gray]{0.9}&\cellcolor[gray]{0.9}&\cellcolor[gray]{0.9}&\cellcolor[gray]{0.9}  \\ \hline
\cellcolor[gray]{0.9}&\cellcolor[gray]{0.9}&&\cellcolor[gray]{0.9}1&\cellcolor[gray]{0.9}&\cellcolor[gray]{0.9}\\ \hline
\cellcolor[gray]{0.9}&\cellcolor[gray]{0.9}&2&\cellcolor[gray]{0.9}&\cellcolor[gray]{0.9}&\cellcolor[gray]{0.9} 1\\ \hline
\cellcolor[gray]{0.9}&\cellcolor[gray]{0.9}\cellcolor[gray]{0.9}&\cellcolor[gray]{0.9} 1&\cellcolor[gray]{0.9}&\cellcolor[gray]{0.9}&\cellcolor[gray]{0.9} \\ \hline
 \end{array}\right|,\end{equation}
 where the essential set consists of the squares with a $\star$ which are then replaced by the number of $1$'s in the northwest submatrix they determine (equivalently, the  rank of the northwest submatrix); recall that the shaded squares are considered deleted.
 Fulton shows that a permutation matrix is determined by these rank numbers and their locations, the {\it ranked essential set}, and that in general none of these can be omitted. For emphasis we remark that the ranked essential set determines $P$ with the knowledge  that $P$ is an $n\times n$ permutation matrix, that is, an $n\times n$ $(0,1)$-matrix with exactly one 1 in each row and column.  The ranked essential set  in (\ref{eq:P}) is given by 
 \[\{(2,1;0), (2,4;1), (5,3;2)\}\]
 where the first two entries in each triple specify the location and the last entry specifies the corresponding rank. These triples  determine the  locations of the 1's of the permutation matrix $P$ and thus {\it the ranked essential set is an epitope for the classes ${\mathcal A}(R,S)$ when $R=S=(1,1,\ldots,1)$}.
 In \cite{EL96} an algorithm is given that determines a permutation matrix from its ranked essential set.  We show that this algorithm works also for a collection of  ``convex matrices" which we call generalized polyominos. In \cite{EL96} it is also shown that a permutation matrix is determined by the rank function on a subset of its essential set called its {\it core} where the core has size at most $n$. Thus the core is also an epitope for these classes. In \cite{EL95} it is proved that  the average size of the essential set of an $n\times n$ permutation matrix is asymptotic to $\frac{n^2}{36}$. The only permutation matrix in ${\mathcal P}_n$ with an empty essential set is the identity matrix. If the essential set has size more than $n$, then the permutation matrix itself is a more compact representation. Therefore, classes of permutation matrices with ``small" ranked essential sets are interesting.

Recall that a permutation $\pi$ of $\{1,2,\ldots,n\}$ is a {\it grassmannian} provided it has exactly one descent (i.e., a pair $(\pi_i,\pi_{i+1})$ such that $\pi_i> \pi_{i+1}$) and is a {\it bigrassmannian} provided  both the  permutation and its inverse are grassmannians.
 With $n=8$, the permutation $(2,5,7,1,3,4,6,8)$ has exactly one descent, namely $(7,1)$ and is a grassmannian.  Its inverse  $(4,1,5,6,2,7,3,8)$  has three descents, namely $(4,1)$, $(6,2)$, and $(7,3)$, and thus   is not a bigrassmannian. The permutation
$(4,5,1,2,3)$ is a bigrassmanian as it has exactly one descent, namely $(5,1)$, and its inverse $(3,4,5,1,2)$ has exactly one descent, namely $(5,1)$. We apply  the words grassmannian and bigrassmannian to permutation matrices as well.

\begin{example}\label{ex:one}{\rm Consider the grassmannian permutation $(2,5,7,1,3,4,6,8)$,  its corresponding grassmannian permutation matrix, its diagram, and its ranked essential set:
\[\begin{array}{|c|c|c|c|c|c|c|c|}
\hline
&\cellcolor[gray]{0.9}1&\cellcolor[gray]{0.9}&\cellcolor[gray]{0.9}&\cellcolor[gray]{0.9}&\cellcolor[gray]{0.9}&\cellcolor[gray]{0.9}&\cellcolor[gray]{0.9}\\ \hline
&\cellcolor[gray]{0.9}&&&\cellcolor[gray]{0.9}1&\cellcolor[gray]{0.9}&\cellcolor[gray]{0.9}&\cellcolor[gray]{0.9}\\ \hline
0&\cellcolor[gray]{0.9}&&1&\cellcolor[gray]{0.9}&2
&\cellcolor[gray]{0.9}1&\cellcolor[gray]{0.9}\\ \hline
\cellcolor[gray]{0.9}1\cellcolor[gray]{0.9}&\cellcolor[gray]{0.9}&\cellcolor[gray]{0.9}&\cellcolor[gray]{0.9}&\cellcolor[gray]{0.9}&\cellcolor[gray]{0.9}&
\cellcolor[gray]{0.9}&\cellcolor[gray]{0.9}\\ \hline
\cellcolor[gray]{0.9}&\cellcolor[gray]{0.9}&\cellcolor[gray]{0.9}1&\cellcolor[gray]{0.9}&\cellcolor[gray]{0.9}&\cellcolor[gray]{0.9}&\cellcolor[gray]{0.9}\cellcolor[gray]{0.9}&\cellcolor[gray]{0.9}\\ \hline
\cellcolor[gray]{0.9}&\cellcolor[gray]{0.9}&\cellcolor[gray]{0.9}&\cellcolor[gray]{0.9}1&\cellcolor[gray]{0.9}&\cellcolor[gray]{0.9}&\cellcolor[gray]{0.9}&\cellcolor[gray]{0.9}\\ \hline
	\cellcolor[gray]{0.9}&\cellcolor[gray]{0.9}&\cellcolor[gray]{0.9}&\cellcolor[gray]{0.9}&\cellcolor[gray]{0.9}&\cellcolor[gray]{0.9}1&\cellcolor[gray]{0.9}&\cellcolor[gray]{0.9}\\ \hline
\cellcolor[gray]{0.9}&\cellcolor[gray]{0.9}&\cellcolor[gray]{0.9}&\cellcolor[gray]{0.9}&\cellcolor[gray]{0.9}&\cellcolor[gray]{0.9}&\cellcolor[gray]{0.9}&\cellcolor[gray]{0.9}1\\ \hline
\end{array}.\]
The ranked essential set is $\{(3,1;0),(3,4;1), (3,6;2)\}$. Notice that the essential set is contained in one row, thus is  {\it row-linear}, and the rank numbers in that row are strictly increasing.
}\hfill{$\Box$}
\end{example}

In \cite{Kobayashi10} it is shown that bigrassmannians are characterized by having their essential sets of cardinality 1. In \cite{EL96} it is shown that grassmannians are characterized by having a row-linear essential set and so we get again that bigrassmannians are characterized by having their essential sets of cardinality 1 (apply the transpose  operation to get the permutation matrix corresponding to the inverse).

The remaining paper is organized as follows. Section \ref{sec:convex} considers convex sets and extends the definition of ranked essential sets to that class. It is shown that the ranked essential set determines a matrix in $\mc{C}(R,S)$ uniquely, and an algorithm for doing this is given. 
Next, Section \ref{sec:subclass-character} is devoted to  subclasses of $\mc{C}(R,S)$ and characterizations of when these classes are nonempty. Section \ref{sec:int-convex} concerns interchanges that preserve convexity and $\mc{A}(R,S)$ classes with only convex matrices. 
Finally, in Section \ref{sec:poset}, we discuss  a natural partial order on the class of $m \times n$ convex matrices. 

Notation: By a {\it line} of a matrix, we mean either a row or a column. The $k\times l$ matrix of all 1's is denoted by $J_{k,l}$, abbreviated to $J_k$ if $l=k$.

\section{Convex $(0,1)$-matrices and ranked essential sets}
\label{sec:convex}

We discuss convex matrices and introduce their ranked essential sets.
A $(0,1)$-matrix $A$ is called {\it connected}
provided that 

(i) it does not have any zero rows or zero columns, and  

(ii) every pair of 1's is  connected by a rookwise path of either row adjacent or column adjacent 1's.

\noindent
To say that a $(0,1)$-matrix is connected  is equivalent to saying that it is the biadjacency matrix of a connected bipartite graph, in particular,  it does not have any isolated vertices. We now show that the essential set  is an epitope of a polyomino in ${\mathcal A}(R,S)$.  We refer to \cite{Balazs07} for information about convex $(0,1)$-matrices for these and the definitions to follow.  A {\it polyomino} is a connected, convex $(0,1)$-matrix\footnote{We could allow zero rows and zero columns in a polyomino as is often done, but such rows and columns would have to be initial or terminal, and thus would not play any role in our investigations.}.  A {\it southeast source}, abbreviated to {\it SE-source}, of a $(0,1)$-matrix is a 1  such that every other 1 is reachable from it by a southeast rookwise path of row adjacent or column adjacent 1's.  NE-sources, NW-sources, and SW-sources are define analogously. When such sources exist, they are clearly unique. The matrix $A$ is {\it directed} provided it has a source of at least one of these types. Since under 90 degree rotations, a source of one type becomes a source of another type, without loss of generality one may assume that a source is a SE-source.  If $A$ is directed and does not have any zero rows or zero columns, then clearly $A$ is connected.
A connected $(0,1)$-matrix $A$ does not have zero rows or zero columns, and hence if $A$ has a SE-source, it must be a 1 in position $(1,1)$.

The first convex matrix in Example \ref{ex:convex0} is neither connected nor directed; the second is a directed polyomino with a NW-source and a SW-source

The  following  theorem is from \cite{Kuba} (see also \cite{Balazs07}).

\begin{theorem}  \label{lem:balazs} A directed, convex matrix $A$ in ${\mathcal A}(R,S)$ is uniquely determined by its source and can be reconstructed in $O(mn)$ time. In particular, a polyomino known to be in a class ${\mathcal A}(R,S)$ which has  a $1$ in position $(1,1)$ $($or $(1,n)$ or $(m,1)$ or $(m,n)$$)$ is uniquely determined.
\end{theorem}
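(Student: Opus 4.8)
The plan is to prove that a directed convex matrix $A \in \mathcal{A}(R,S)$ can be reconstructed greedily, row by row (or column by column), starting from its source, since the source together with the row and column sums pins down everything. Without loss of generality assume the source is a SE-source, so (since $A$ is connected with no zero lines) it lies in position $(1,1)$.

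First I would establish the shape. Because $A$ is row convex and has a SE-source at $(1,1)$, the $1$'s in row $1$ are exactly columns $1,2,\ldots,r_1$ — they must be consecutive, and they must start at column $1$, since every other $1$ is reachable from $(1,1)$ by a south/east path, which forces column $1$ to be occupied in row $1$ and forces the block to be an initial segment. More generally, column convexity plus the SE-source property implies that in each column $j$ the $1$'s form a consecutive block whose \emph{top} is determined: column $1$'s block starts at row $1$, and for $j \ge 2$ the top of column $j$'s block cannot be strictly below the bottom of column $j-1$'s block minus overlap dictated by connectivity; in fact one shows the top entries are nonincreasing as $j$ increases over the support, and similarly row blocks start positions are nonincreasing down the rows. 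This ``staircase'' structure is the heart of being directed.

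Next, given this staircase structure, I would run the reconstruction: process rows $i = 1, 2, \ldots, m$. Maintain, for each column $j$, the number $c_j$ of $1$'s already placed in that column above row $i$. When we reach row $i$, its $1$'s occupy a consecutive block of length $r_i$; by the SE-source / staircase property the block in row $i$ must be ``anchored'' relative to row $i-1$ so that column convexity is not violated — concretely, the leftmost column of row $i$'s block is forced (it is the smallest column $j$ for which placing a $1$ keeps column $j$ contiguous and for which the remaining column sums can still be met), and then the block extends $r_i$ columns to the right. Update the $c_j$'s. Because at each step the placement is forced, the whole matrix is determined, and each row costs $O(n)$ work, giving $O(mn)$ total. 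The final sentence of the theorem — the polyomino case — is the special instance in which the source is automatically at a corner, so it is an immediate corollary: a polyomino with a $1$ in position $(1,1)$ has that $1$ as its SE-source (or one checks the other three corners give NE-, SW-, NW-sources after rotation).

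\textbf{Main obstacle.} The delicate point is proving that at every step the forced choice is actually \emph{consistent} — i.e., that the greedy placement dictated by the source never paints us into a corner where the remaining row/column sums are unrealizable. Since we are told $A$ exists in $\mathcal{A}(R,S)$, existence is not in question, but uniqueness requires showing there is \emph{no other} directed convex completion: one must argue that any directed convex matrix with the given source and sums must agree with the greedy one in row $1$, hence (by induction on the remaining rows, noting the residual problem is again of the same type on a smaller matrix with updated sums) everywhere. Making the induction clean — in particular identifying precisely how the residual row and column sum vectors and the ``staircase constraint'' propagate — is where the real work lies; the $O(mn)$ bound then falls out of the fact that the inductive step touches each entry a constant number of times. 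I would cite \cite{Kuba, Balazs07} for the detailed bookkeeping rather than reproduce it.
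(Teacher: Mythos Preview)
The paper does not prove this theorem: it is quoted as a known result from \cite{Kuba} (see also \cite{Balazs07}), with no argument given. So there is no ``paper's own proof'' to compare against. Your proposal is in the same spirit as what those references do --- a greedy row-by-row (or column-by-column) reconstruction exploiting the staircase shape forced by the source --- and you yourself defer the detailed bookkeeping to \cite{Kuba,Balazs07}, which is exactly what the paper does.

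One concrete slip in your sketch: with a SE-source at $(1,1)$, the monotonicity goes the other way than you state. The top of the $1$-block in column $j$ is \emph{nondecreasing} in $j$ (not nonincreasing), and the leftmost $1$ in row $i$ is \emph{nondecreasing} in $i$ (not nonincreasing). For instance, in
\[
\left[\begin{array}{ccc}1&1&0\\1&1&1\\0&1&1\end{array}\right]
\]
the column tops are rows $1,1,2$ and the row starts are columns $1,1,2$. This does not break your argument --- the greedy reconstruction still works --- but the staircase runs southeast, not northwest, and you should correct the stated direction before the inductive step makes sense.
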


\begin{example}\label{ex:source}{\rm  If the row and column sum vectors $R$ and $S$  do not have any zeros, then the only possibility for a source is a 1 in one of the corner positions, and each such 1 is a source. Let $R=(1,3,4,3,1)$ and $S=(1,2,4,3,2)$. Then the two matrices
\[A_1=\left[\begin{array}{c|c|c|c|c}
&&&1&\\ \hline
&&1&1&1\\ \hline
&1&1&1&1\\ \hline
1&1&1&&\\ \hline
&&1&&\end{array}\right]\mbox{ and }
A_2=\left[\begin{array}{c|c|c|c|c}
&&1&&\\ \hline
&1&1&1&\\ \hline
1&1&1&1&\\ \hline
&&1&1&1\\ \hline
&&&&1\end{array}\right]\]
both belong to ${\mc C}(R,S)$ where $A_1$ does not have a source but $A_2$ has a NW-source.
Thus $A_2$ is uniquely constructable from $R$ and $S$ and its source in position $(5,5)$.
}\endproof\end{example}


We can define the essential set and ranked essential set  for any $m\times n$ $(0,1)$-matrix $A$ as for permutation matrices: For each 1 of $A$  shade (or cross out) its row to the east and its column to the
south (including the 1 itself). This gives the {\it diagram} of $A$, defined as the set of unshaded positions. The {\it essential set} of $A$ is the set of southeast corners of the connected components of the diagram of $A$.
In each element of the essential set  put the number of 1's in the leading submatrix it determines (that is, the number of $1$'s in northeast submatrix with the specified element of the essential set in its southeast corner). These corner positions and their numbers give the {\it ranked essential set}.

\begin{example}\label{ex:convex2} {\rm Consider the polyomino $A_2$ in Example \ref{ex:convex0} with row sum vector $R=(5,2,2,3,3,1)$ and column sum vector $S=(2,2,5,4,1,1,1)$. Then its ranked essential set is 
easily computed to be $\{(3,2;0),(4,1;0)\}$. Any  convex matrix in ${\mathcal C}(R,S)$ with this ranked essential set equals $A_2$; see below.
}\hfill{$\Box$}\end{example}

We define a {\em Ferrers array} $F$ to be a left-justified array of positions  where  the number of positions in the rows form   a nonincreasing  vector $U$. Thus, $F$ is uniquely defined by $U$, and we write $F=F(U)$.

\begin{theorem} 
\label{lem:epi}
 Let $A$ be a polyomino of size $m \times n$. Then the diagram of  $A$ is a  Ferrers array F. Its ranked essential set consists of the positions of the lower right corners of  $F$ each filled with a $0$. 
A polyomino  $A\in {\mc C}_{m,n}(R,S)$  is uniquely determined by its ranked essential set.
\end{theorem}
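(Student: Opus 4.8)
The plan is to prove the three assertions in turn, using the first two to reduce the uniqueness statement to a claim about diagrams.

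\emph{Step 1: the diagram of $A$ is a Ferrers array.} Write $D(A)$ for the diagram and, for each row $i$, let $l_i$ be the column of the leftmost $1$ of row $i$ (a polyomino has no zero line, so $l_i$ exists). Unwinding the definition, $(i,j)\in D(A)$ iff row $i$ has no $1$ in columns $1,\dots,j$ and column $j$ has no $1$ in rows $1,\dots,i$. I will show $D(A)$ is closed under the move $(i,j)\mapsto(i-1,j)$; closure under $(i,j)\mapsto(i,j-1)$ then follows by applying the same fact to $A^{T}$, using $D(A^{T})=D(A)^{T}$ (and that $A^{T}$ is again a polyomino). So let $(i,j)\in D(A)$ with $i\ge2$. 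Column $j$ has no $1$ in rows $1,\dots,i$, hence none in rows $1,\dots,i-1$; so it suffices to rule out a $1$ of row $i-1$ in columns $1,\dots,j$. If such a $1$ existed then, by row-convexity and since a $1$ of row $i-1$ in column $j$ would contradict $(i,j)\in D(A)$, \emph{all} $1$'s of row $i-1$ would lie in columns $<j$; fix one, at $(i-1,c)$. Row $i$ has its $1$'s starting at $l_i>j$. Take a rookwise path of $1$'s of $A$ from $(i-1,c)$ to $(i,l_i)$ (it exists since $A$ is connected) and let $(p,b)$ be its first cell in a row $\ge i$; its predecessor lies in a row $\le i-1$, so the step into $(p,b)$ is vertical, forcing $(p,b)=(i,b)$ with $(i-1,b)$ a $1$ of $A$. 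But then $b<j$ (a $1$ of row $i-1$) and $b\ge l_i>j$ (a $1$ of row $i$) --- a contradiction. Hence $D(A)$ is closed under moving up and left, so each of its rows is an initial segment $\{(i,1),\dots,(i,c_i)\}$ with $c_1\ge\cdots\ge c_m$, i.e.\ $D(A)=F(U)$ with $U=(c_1,\dots,c_m)$.

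\emph{Step 2: the ranked essential set.} If $F$ is empty there is nothing to check. Otherwise $F$ contains $(1,1)$ and, being left- and top-justified, is connected, so its essential set is exactly its lower-right corners, the cells $(i,j)\in F$ with $(i+1,j),(i,j+1)\notin F$. For such a corner, left/top-justification gives $[1,i]\times[1,j]\subseteq F=D(A)$, and every cell of $D(A)$ carries a $0$ of $A$ (a diagram cell has, by definition, no $1$ of $A$ in its own row up to and including itself); so the leading $i\times j$ submatrix of $A$ is zero and the corner is ranked $0$.

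\emph{Step 3: uniqueness.} Suppose $A,A'\in\mc{C}_{m,n}(R,S)$ are polyominoes with the same ranked essential set. By Steps 1--2 their diagrams are Ferrers arrays with the same lower-right corners; since a Ferrers array is the union of the rectangles $[1,i]\times[1,j]$ over its lower-right corners, $D(A)=D(A')$. Assume $A\ne A'$ and let $i$ be the first row where they differ. Row $i$ of each is an interval of $R_i$ ones, so their left endpoints differ; after possibly swapping $A$ and $A'$, write $l_i<l_i'$ for these endpoints in $A$ and $A'$. Since $A_{i,l_i}=1$, the cell $(i,l_i)$ is shaded in $A$, hence also in $A'$ as $D(A)=D(A')$; as row $i$ of $A'$ starts at $l_i'>l_i$, this shading must come from a $1$ of $A'$ in column $l_i$ in some row $\le i$, in fact in a row $<i$ since $A'_{i,l_i}=0$. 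So $i\ge2$, and, $A$ and $A'$ agreeing on rows $1,\dots,i-1$, column $l_i$ of $A$ also has a $1$ above row $i$; together with $A_{i,l_i}=1$, column-convexity of $A$ gives $A_{i-1,l_i}=1$, hence $A'_{i-1,l_i}=1$. Then column-convexity of $A'$ and $A'_{i,l_i}=0$ put every $1$ of column $l_i$ of $A'$ in rows $1,\dots,i-1$, so column $l_i$ of $A'$ has exactly as many $1$'s as column $l_i$ of $A$ does in rows $1,\dots,i-1$ --- strictly fewer than in $A$ overall, which also contains the $1$ at row $i$. This contradicts $A$ and $A'$ sharing the column sum $S_{l_i}$. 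Hence $A=A'$.

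\emph{Main obstacle.} The substance is in Steps 1 and 3: in Step 1 one must be careful to pick the right cell of the path (the first one in a row $\ge i$) and to see that the step into it is vertical; in Step 3 the key is to isolate the correct quantity --- the number of $1$'s in column $l_i$ --- and to play column-convexity off against the shared column sum $S_{l_i}$. The rank-$0$ computation in Step 2 and the reduction of ``same ranked essential set'' to ``same diagram'' are then routine.
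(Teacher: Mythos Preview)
Your proof is correct, and it follows a genuinely different route from the paper's.

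For the Ferrers structure, the paper argues geometrically: it identifies the northeast staircase path $P_1$ traced out by the leftmost $1$'s of the first several rows, observes that the diagram is exactly the region northwest of $P_1$, and reads off the Ferrers shape from that. You instead prove directly that $D(A)$ is closed under the moves $(i,j)\mapsto(i-1,j)$ and $(i,j)\mapsto(i,j-1)$, invoking connectedness only at the moment you need a rookwise path of $1$'s between the two rows to force a contradiction. Your argument is a bit more formal and modular; the paper's is more pictorial.

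For uniqueness, the difference is sharper. The paper gives a reconstruction sketch: the essential set determines $F$, the cells adjacent to $F$ are forced to be $1$'s, and then ``convexity, connectivity, and $R$ and $S$'' fill in the rest. You instead run a minimal-counterexample argument comparing two candidate polyominoes row by row, and derive a contradiction solely from column-convexity and the shared column sum $S_{l_i}$. Two payoffs of your approach: it is fully detailed where the paper's is a sketch, and it never uses connectedness in Step~3, so once the diagrams are known to agree your argument actually proves the stronger fact that \emph{any} two convex matrices in $\mc{C}_{m,n}(R,S)$ with the same diagram coincide. The paper's approach, on the other hand, is constructive and feeds directly into the reconstruction algorithm developed afterward.
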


\begin{proof}  
Let $A=[a_{ij}]$. Since every row and column contains a 1, there exist  a smallest $i$ and a smallest $j$ such that $a_{i1}=a_{1j}=1$. Also, as $A$ is connected and convex, the leftmost 1 in the first $i$ rows determine a northeast rookwise path $P_1$ from $(i,1)$ to $(1,j)$ (possibly, it only consists of $(1,1)$). There is also a smallest $k$ such that $a_{mk}=1$ and  a southeast rookwise path $P_2$ from $(i,1)$ to  position $(m,k)$. Clearly, the only positions in $A$ that remain after crossing out the row to the east and column to the south of the 1's in  $P_1 \cup P_2$   are the positions northwest of the path $P_1$, and these positions form a Ferrers array $F$ and give the diagram of $A$.  The lower right corners of $F$ constitute the essential set  of $A$, and the ranked essential set has 0's  in all of these positions. Thus  all the entries of $A$ in the positions of the diagram $F$ equal 0. Moreover, the essential set also determines the diagram $F$. Then each position in $A$ adjacent to $F$ in a row or column is a 1. Now  with convexity, connectivity, and $R$ and $S$, the remaining positions  of $A$ are  determined.
\end{proof}

\begin{corollary} \label{cor:new}
The ranked essential set is an epitope  of a polyomino  in ${\mathcal A}(R,S)$.
\end{corollary}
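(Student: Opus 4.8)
The plan is to derive this corollary directly from Theorem~\ref{lem:epi}, essentially by unwinding the definition of ``epitope.'' Recall that an epitope of ${\mathcal A}(R,S)$ is any piece of information that singles out a unique matrix among those already known to lie in ${\mathcal A}(R,S)$; here the additional knowledge we are allowed to use is that the matrix in question is a polyomino. So I would begin by observing that a polyomino belonging to ${\mathcal A}(R,S)$ is, by the very definition of a polyomino as a connected, convex $(0,1)$-matrix, exactly a member of the subclass ${\mathcal C}_{m,n}(R,S)$. Thus the hypothesis ``$A$ is a polyomino known to be in ${\mathcal A}(R,S)$'' coincides with the hypothesis ``$A\in{\mathcal C}_{m,n}(R,S)$ is a polyomino,'' which is precisely the hypothesis of the uniqueness clause of Theorem~\ref{lem:epi}.

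The second step is simply to invoke that clause. Theorem~\ref{lem:epi} asserts that such an $A$ is uniquely determined by its ranked essential set: from the essential set one recovers the Ferrers diagram $F$ of $A$, the entries of $A$ on $F$ are forced to be $0$ (the ranks there are $0$), the entries immediately bordering $F$ are forced to be $1$, and then all remaining entries are determined by convexity, connectivity, and the prescribed vectors $R$ and $S$. Since the ranked essential set therefore determines $A$ uniquely among all matrices known to be polyominoes in ${\mathcal A}(R,S)$, it is by definition an epitope of such a polyomino, which is the assertion.

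I do not anticipate a genuine obstacle here: all the mathematical content resides in Theorem~\ref{lem:epi}, and the corollary is a one-line reformulation of it in the language of epitopes. The only points deserving a word of care are bookkeeping ones, namely that the notion of epitope is always taken relative to a fixed pair $(R,S)$ (so there is no ambiguity about which class the matrix is being reconstructed from), and that ``polyomino in ${\mathcal A}(R,S)$'' and ``polyomino in ${\mathcal C}_{m,n}(R,S)$'' denote the same objects. Once these identifications are made explicit, the proof is immediate.
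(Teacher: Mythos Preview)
Your proposal is correct and matches the paper's approach: the corollary is stated without proof as an immediate consequence of Theorem~\ref{lem:epi}, and your argument simply makes explicit the (trivial) translation between ``uniquely determined'' and ``epitope.'' The only superfluous bit is the first paragraph's identification of polyominoes in ${\mathcal A}(R,S)$ with polyominoes in ${\mathcal C}_{m,n}(R,S)$, which is automatic since polyominoes are convex by definition.
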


\begin{corollary}\label{cor:new2}
The essential set of a polyomino $A$ is empty if and only if $A$ has a SE-source.
\end{corollary}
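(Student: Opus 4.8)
The statement to prove is Corollary~\ref{cor:new2}: the essential set of a polyomino $A$ is empty if and only if $A$ has a SE-source. Since a polyomino is connected, Corollary~\ref{cor:new} and the surrounding discussion already tell us that a connected matrix with a SE-source must have its unique $1$ in a SE-source position equal to a $1$ in position $(1,1)$. So the plan is to connect ``empty essential set'' with ``$a_{11}=1$'' directly through the Ferrers-array description established in Theorem~\ref{lem:epi}.

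First I would argue the easy direction. Suppose $A$ has a SE-source; because $A$ is a polyomino it has no zero rows or columns, so (as noted in the text preceding Theorem~\ref{lem:balazs}) the SE-source must sit in position $(1,1)$, i.e.\ $a_{11}=1$. Then crossing out the first row to the east and the first column to the south already shades every position in row~$1$ and column~$1$; in particular the position $(1,1)$ itself is shaded, and the diagram of $A$ — the set of unshaded positions — is empty. An empty diagram has no connected components, hence no southeast corners, so the essential set is empty.

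For the converse, suppose the essential set of $A$ is empty. By Theorem~\ref{lem:epi} the diagram of $A$ is a Ferrers array $F$, and the essential set is exactly the set of lower-right corners of $F$. A Ferrers array with no lower-right corner must be the empty array, so $F=\emptyset$. Now I revisit the proof of Theorem~\ref{lem:epi}: there $(i,1)$ is the topmost $1$ in column~$1$ and $(1,j)$ the leftmost $1$ in row~$1$, and the diagram $F$ consists precisely of the positions lying strictly northwest of the northeast rookwise path $P_1$ joining $(i,1)$ to $(1,j)$. If $i>1$ or $j>1$, that region is nonempty (for instance position $(1,1)$ itself lies in it when $j>1$, and likewise when $i>1$), contradicting $F=\emptyset$. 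Hence $i=j=1$, so $a_{11}=1$; and a $1$ in position $(1,1)$ of a connected convex matrix is a SE-source by Theorem~\ref{lem:balazs} (or directly: from $(1,1)$ one reaches every other $1$ by a southeast rookwise path, using connectivity and convexity exactly as in that proof). Thus $A$ has a SE-source.

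I do not expect a serious obstacle here; the corollary is essentially a reading-off of Theorem~\ref{lem:epi}. The one point requiring a little care is making the ``$F=\emptyset \iff a_{11}=1$'' equivalence airtight — in particular checking that when the diagram is nonempty it genuinely contains a position that forces a nonempty essential set (true because a nonempty Ferrers array always has at least one lower-right corner). Everything else is bookkeeping with the definitions of SE-source, connectedness, and the diagram.
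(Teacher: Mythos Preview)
Your approach is exactly the paper's: both directions go through the equivalence ``essential set empty $\Leftrightarrow$ diagram (Ferrers array) empty $\Leftrightarrow a_{11}=1 \Leftrightarrow$ SE-source,'' leaning on Theorem~\ref{lem:epi}. The paper's proof is in fact terser than yours.

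One small gap to patch in your ``easy'' direction: from ``row~$1$ and column~$1$ are shaded'' (or even ``$(1,1)$ is shaded'') you jump directly to ``the diagram is empty.'' That inference is not valid on its own---shading the first row and column says nothing about, say, position $(2,2)$. What makes it work is precisely Theorem~\ref{lem:epi}: the diagram is a Ferrers array, and a nonempty Ferrers array necessarily contains $(1,1)$; since $(1,1)$ is shaded, the array is empty. You invoke Theorem~\ref{lem:epi} explicitly in the converse direction and in your opening plan, so you clearly have this in mind---just cite it here as well and the argument is complete.
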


\begin{proof} If the essential set of $A$ is empty, then clearly $A$ has a 1 in position $(1,1)$ and thus a SE-source. Conversely, assume that $A$ has a SE-source which is necessarily a 1 in position $(1,1)$. Then the associate Ferrers array, that is, its essential set  is empty.
\end{proof}

\begin{example}
\label{ex:convex1}
 {\rm Below is the matrix $A_2$ from Example \ref{ex:convex0} and its ranked essential set (to the left); the diagram is the unshaded area constituting a Ferrers array $F$. The matrix $A_2$ is uniquely reconstructed by first putting zeros in all the positions in the Ferrers array,  and then putting ones in  all positions adjacent to $F$. The row and column sums then determine $A_2$. Here  $R=(5,2,2,3,3,1)$ and $S=(2,2,5,4,1,1,1)$.
\[\left[
\begin{array}{c|c|c|c|c|c|c}
&&\cellcolor[gray]{0.9}&\cellcolor[gray]{0.9}&\cellcolor[gray]{0.9}&\cellcolor[gray]{0.9}&\cellcolor[gray]{0.9}\\ \hline
&&\cellcolor[gray]{0.9}&\cellcolor[gray]{0.9}&\cellcolor[gray]{0.9}&\cellcolor[gray]{0.9}&\cellcolor[gray]{0.9}\\ \hline
&0&\cellcolor[gray]{0.9}\mbox{ }&\cellcolor[gray]{0.9}\mbox{ }&\cellcolor[gray]{0.9}\mbox{ }&\cellcolor[gray]{0.9}\mbox{ }&\cellcolor[gray]{0.9}\mbox{ }\\ \hline
0&\cellcolor[gray]{0.9}&\cellcolor[gray]{0.9}&\cellcolor[gray]{0.9}&\cellcolor[gray]{0.9}&\cellcolor[gray]{0.9}&\cellcolor[gray]{0.9}\\ \hline
\cellcolor[gray]{0.9}&\cellcolor[gray]{0.9}&\cellcolor[gray]{0.9}&\cellcolor[gray]{0.9}&\cellcolor[gray]{0.9}&\cellcolor[gray]{0.9}&\cellcolor[gray]{0.9}\\ \hline
\cellcolor[gray]{0.9}&\cellcolor[gray]{0.9}&\cellcolor[gray]{0.9}&\cellcolor[gray]{0.9}&\cellcolor[gray]{0.9}&\cellcolor[gray]{0.9}&\cellcolor[gray]{0.9}
\end{array}\right]\rightarrow
\left[
\begin{array}{c|c|c|c|c|c|c}
0&0&1&\phantom{1}&\phantom{1}&\phantom{1}&\phantom{1}\\ \hline
0&0&1&&&&\\ \hline
0&0&1&&&&\\ \hline
0&1&&&&&\\ \hline
1&&&&&&\\ \hline
&&&&&&\end{array}\right]\rightarrow
\left[
\begin{array}{c|c|c|c|c|c|c}
0&0&1&1&1&1&1\\ \hline
0&0&1&1&&&\\ \hline
0&0&1&1&&&\\ \hline
0&1&1&1&&&\\ \hline
1&1&1&&&&\\ \hline
1&&&&&&\end{array}\right].\]
}\hfill{$\Box$}
\end{example}

We define a {\it generalized polyomino} to be a convex $m\times n$ $(0,1)$-matrix without zero rows or zero columns. Thus, in contrast to a polyomino,  a   generalized polyomino may not be connected. A generalized polyomino is  obtained from some $k\times k$  permutation matrix by replacing each of its 1's by a polyomino. If the 1 in row $i$ of $P$ is replaced by the polyomino $A_i$ $(1\le i\le k)$, then we denote this generalized polyomino by $P(A_1,A_2,\ldots,A_k)$.

\begin{example}\label{ex:genpoly1} {\rm Let 
\[P=\left[\begin{array}{ccc}0 &1&0\\ 0&0&1\\ 1&0&0\end{array}\right],
A_1=\left[\begin{array}{ccc} 0&1&1\\  1&1&0\\ 1&0&0\end{array}\right],
A_2=\left[\begin{array}{ccc} 0&1&1\\ 1&1&1\\ 1&0&0 \end{array}\right],
A_3=\left[\begin{array}{ccc}  1&1&1\\ 0&1&1\end{array}\right].\]
Then
\[P(A_1,A_2,A_3)=
\left[\begin{array}{ccc||ccc||ccc}
&&&0&1&1&&&\\
&&&1&1&0&&&\\
&&&1&0&0&&&\\ \hline\hline
&&&&&&0&1&1\\
&&&&&&1&1&1\\
&&&&&&1&0&0\\  \hline\hline
0&1&1&&&&&&\\
1&1&1&&&&&&\end{array}\right]\]
where unspecified entries equal 0.
}\endproof \end{example}
\noindent
 Our generalized polyomino is  an 8-connected $(0,1)$-convex matrix  as defined  in
  \cite{Balazs07},  but without any zero rows or columns.
In the lemma below, for ease of exposition and subsequent examples, we now consider the diagram of an $m\times n$  $(0,1)$-matrix $A$ to be an $m\times n$ array (rather than just a subset of positions of an array) whose unshaded positions are what we called previously the diagram of $A$; so the diagram is considered as an $m\times n$ array of positions some of which are unshaded.
The following lemma is straightforward to verify.

\begin{lemma}\label{lem:diagram}
The diagram of the generalized polyomino $P(A_1,A_2,\ldots,A_k)$ is obtained from the diagram of  $P$  by replacing its shaded zeros with appropriately sized shaded arrays, its unshaded zeros with appropriately unshaded arrays, and its ones with the corresponding  diagram of the $A_i$'s.
\end{lemma}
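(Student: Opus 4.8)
The plan is to verify the claim position by position, using the combinatorial description of the diagram. Write $\pi$ for the permutation with $p_{i,\pi(i)}=1$ in the underlying permutation matrix $P=[p_{ij}]$, let $A_i$ have size $m_i\times n_i$, and set $M:=P(A_1,\ldots,A_k)$. Then $M$ is partitioned into blocks indexed by pairs $(i,c)$ with $1\le i,c\le k$, the $(i,c)$ block having $m_i$ rows and $n_c$ columns; block $(i,\pi(i))$ equals $A_i$, and every block $(i,c)$ with $c\ne\pi(i)$ is a zero block. I will use throughout the following reformulation of the diagram of an arbitrary $(0,1)$-matrix $B=[b_{rs}]$: a position $(r,s)$ lies in the diagram of $B$ exactly when $B$ has no $1$ in row $r$ in a column $\le s$ and no $1$ in column $s$ in a row $\le r$. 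Two structural facts about $M$ will be used: (i) every $1$ of $M$ lies inside one of the blocks $A_i$; and (ii) since no $A_i$ has a zero line, within each block-row $j$ every row of $M$ contains a $1$ (inside block $(j,\pi(j))$) and within each block-column $\pi(j)$ every column of $M$ contains a $1$ (inside block $(j,\pi(j))$).

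Fix a position $(r,s)$ of $M$ lying in block $(i,c)$ and split into cases according to $c$ versus $\pi(i)$. If $c>\pi(i)$, then by (ii) row $r$ contains a $1$ of $M$ inside block-column $\pi(i)$, hence in a column $\le s$, so $(r,s)$ is shaded; this agrees with the diagram of $P$, in which the zero $p_{ic}$ ($c>\pi(i)$) is shaded because the $1$ in position $(i,\pi(i))$ lies in a column $\le c$. If $c<\pi(i)$, then every block $(i,c')$ with $c'\le c$ is a zero block, so row $r$ has no $1$ of $M$ in a column $\le s$; thus $(r,s)$ lies in the diagram of $M$ iff column $s$ has no $1$ of $M$ in a row $\le r$. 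By (i) the $1$'s of $M$ in column $s$ all lie in the unique nonzero block of block-column $c$, namely $A_{\pi^{-1}(c)}$, which occupies block-row $\pi^{-1}(c)$; and by (ii) column $s$ does contain such a $1$. Hence $(r,s)$ is in the diagram iff $\pi^{-1}(c)>i$, a criterion independent of the choice of $(r,s)$ inside block $(i,c)$. So that block is entirely shaded or entirely unshaded, and since $p_{ic}$ is unshaded in the diagram of $P$ precisely when $\pi(i)>c$ and $\pi^{-1}(c)>i$ (the first condition holding here), block $(i,c)$ of the diagram of $M$ is the shaded or unshaded array corresponding to the shaded or unshaded zero $p_{ic}$ of the diagram of $P$.

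Finally, if $c=\pi(i)$, then block $(i,c)$ is $A_i$ itself. In block-row $i$ the only nonzero block is $A_i$, so the $1$'s of $M$ in row $r$ are exactly those of $A_i$ in the corresponding row; since $\pi$ is a bijection, $A_i$ is also the only nonzero block in block-column $\pi(i)$, so the $1$'s of $M$ in column $s$ are exactly those of $A_i$ in the corresponding column. Consequently $(r,s)$ lies in the diagram of $M$ iff $A_i$ has no $1$ in its relevant row in a column $\le$ that of $s$ and no $1$ in its relevant column in a row $\le$ that of $r$; that is, iff the corresponding position of $A_i$ belongs to the diagram of $A_i$. Hence the restriction of the diagram of $M$ to block $A_i$ is precisely the diagram of $A_i$. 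Combining the three cases yields the asserted description of the diagram of $P(A_1,\ldots,A_k)$.

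The only point that needs a little care is the case $c<\pi(i)$: one must observe that a zero block of $M$ can be entirely \emph{unshaded}, and that this occurs exactly at the staircase positions that are strictly northwest of the block-row and block-column of every $A_j$ ``covering'' them --- which is precisely the pattern of unshaded zeros in the diagram of $P$. Everything else is routine bookkeeping resting on the no-zero-line hypothesis on the $A_i$ and on $\pi$ being a permutation.
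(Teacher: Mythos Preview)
Your argument is correct. The paper itself gives no proof of this lemma beyond the remark that it ``is straightforward to verify,'' so there is nothing to compare against in detail; your position-by-position case analysis (according to whether the block column $c$ is greater than, less than, or equal to $\pi(i)$) is exactly the kind of direct verification the authors have in mind, and you have carried it out cleanly, making explicit use of the no-zero-line hypothesis on each $A_i$ where it is needed.
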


\begin{example}\label{ex:genpoly2} {\rm Continuing with Example \ref{ex:genpoly1}, the diagram of $P(A_1,A_2,A_3)$ and its ranked essential set is as specified in
\[\left[\begin{array}{ccc||ccc||ccc}
&&&0&\cellcolor[gray]{0.8}&\cellcolor[gray]{0.8}&&&\\ 
&&&\cellcolor[gray]{0.8}&\cellcolor[gray]{0.8}&\cellcolor[gray]{0.8}&&\cellcolor[gray]{0.8}X&\\ 
&&&\cellcolor[gray]{0.8}&\cellcolor[gray]{0.8}&\cellcolor[gray]{0.8}&&&\\  \hline\hline
&&&&&&5&\cellcolor[gray]{0.8}&\cellcolor[gray]{0.8}\\ 
&&&&\cellcolor[gray]{0.8}X&&\cellcolor[gray]{0.8}&\cellcolor[gray]{0.8}&\cellcolor[gray]{0.8}\\ 
&&0&&&&\cellcolor[gray]{0.8}&\cellcolor[gray]{0.8}&\cellcolor[gray]{0.8}\\ \hline\hline
0&\cellcolor[gray]{0.8}&\cellcolor[gray]{0.8}&&&&&&\\ 
\cellcolor[gray]{0.8}&\cellcolor[gray]{0.8}&\cellcolor[gray]{0.8}&&\cellcolor[gray]{0.8}X&&&\cellcolor[gray]{0.8}X&
\end{array}\right],\]
where the shaded $X$'s are matrices of appropriate sizes with all other positions shaded.
}
\endproof\end{example}


\begin{example}\label{ex:convexconstr}
{\rm Consider the generalized polyomino $P(A_1,A_2)$, with  $P=I_2$, $R=(2,2,2,1,1,2)$, and $S=(2,2,1,4,1)$ given by
\[A=
\left[\begin{array}{c|c|c|c|c}
1&1&&&\\ \hline
1&1&&&\\ \hline
&&&1&1\\ \hline
&&&1&\\ \hline
&&&1&\\ \hline
&&1&1&\end{array}\right].\]
Bordering the matrix  by its row sum vector and column sum vector for visual ease, we see that the essential set and ranked essential set   are specified in 
\[\left[\begin{array}{c||c|c|c|c|c}
&2&2&1&4&1\\ \hline\hline
2&\cellcolor[gray]{0.8}&\cellcolor[gray]{0.8}&\cellcolor[gray]{0.8}&\cellcolor[gray]{0.8}&\cellcolor[gray]{0.8}\\ \hline
2&\cellcolor[gray]{0.8}&\cellcolor[gray]{0.8}&\cellcolor[gray]{0.8}&\cellcolor[gray]{0.8}&\cellcolor[gray]{0.8}\\ \hline
2&\cellcolor[gray]{0.8}&\cellcolor[gray]{0.8}&&\cellcolor[gray]{0.8}&\cellcolor[gray]{0.8}\\ \hline
1&\cellcolor[gray]{0.8}&\cellcolor[gray]{0.8}&&\cellcolor[gray]{0.8}&\cellcolor[gray]{0.8}\\ \hline
1&\cellcolor[gray]{0.8}&\cellcolor[gray]{0.8}&4&\cellcolor[gray]{0.8} &\cellcolor[gray]{0.8}\\ \hline
2&\cellcolor[gray]{0.8}&\cellcolor[gray]{0.8}&\cellcolor[gray]{0.8}&\cellcolor[gray]{0.8}&\cellcolor[gray]{0.8}\end{array}\right].\]
We show how  $A$ can be constructed from $R$, $S$, and the ranked essential set.
Using convexity and  the ranked essential set as determined above, it is not hard to first get
\[\left[\begin{array}{c||c|c|c|c|c}
&2&2&1&4&1\\ \hline\hline
2&1&1&0&&\\  \hline
2&1&1&0&&\\ \hline
2&0&0&0&&\\ \hline
1&0&0&0&&\\ \hline
1&0&0&0&&\\ \hline
2&&&&&\end{array}\right]
\mbox{ and then }
\left[\begin{array}{c||c|c|c|c|c}
&2&2&1&4&1\\ \hline\hline
2&1&1&0&0&0\\  \hline
2&1&1&0&0&0\\ \hline
2&0&0&0&1&1\\ \hline
1&0&0&0&1&0\\ \hline
1&0&0&0&1&0\\ \hline
2&0&0&1&1&0\end{array}\right].\]

As another example, the generalized polyomino with specified ranked essential set given by
\[\left[\begin{array}{c||c|c|c|c|c|c|c} 
&1&1&3&2&2&2&1\\ \hline\hline
1&&&1&&&&\\ \hline
1&&&1&&&&\\ \hline
3&1&1&1&&&&\\ \hline
2&&&&&&1&1\\ \hline
2&&&&&1&1&\\ \hline
2&&&&1&1&&\\ \hline
1&&&&1&&&\end{array}\right]\rightarrow
\left[\begin{array}{c||c|c|c|c|c|c|c} 
&1&1&3&2&2&2&1\\ \hline\hline
1&&&\cellcolor[gray]{0.8}&\cellcolor[gray]{0.8}&\cellcolor[gray]{0.8}&\cellcolor[gray]{0.8}&\cellcolor[gray]{0.8}\\ \hline
1&&0&\cellcolor[gray]{0.8}\cellcolor[gray]{0.8}&\cellcolor[gray]{0.8}\cellcolor[gray]{0.8}&\cellcolor[gray]{0.8}
&\cellcolor[gray]{0.8}&\cellcolor[gray]{0.8}\\ \hline
3&\cellcolor[gray]{0.8}&\cellcolor[gray]{0.8}&\cellcolor[gray]{0.8}&\cellcolor[gray]{0.8}&\cellcolor[gray]{0.8}&\cellcolor[gray]{0.8}&\cellcolor[gray]{0.8}\\ \hline
2&\cellcolor[gray]{0.8}&\cellcolor[gray]{0.8}&\cellcolor[gray]{0.8}&&5&\cellcolor[gray]{0.8}&\cellcolor[gray]{0.8}\\ \hline
2&\cellcolor[gray]{0.8}&\cellcolor[gray]{0.8}&\cellcolor[gray]{0.8}&5&\cellcolor[gray]{0.8}&\cellcolor[gray]{0.8}&\cellcolor[gray]{0.8}\\ \hline
2&\cellcolor[gray]{0.8}&\cellcolor[gray]{0.8}&\cellcolor[gray]{0.8}&\cellcolor[gray]{0.8}&\cellcolor[gray]{0.8}&\cellcolor[gray]{0.8}&\cellcolor[gray]{0.8}\\ \hline
1&\cellcolor[gray]{0.8}&\cellcolor[gray]{0.8}&\cellcolor[gray]{0.8}&\cellcolor[gray]{0.8}&\cellcolor[gray]{0.8}&\cellcolor[gray]{0.8}&\cellcolor[gray]{0.8}\end{array}\right]
\]
is readily reconstructable.
}\hfill{$\Box$}\end{example}



We now extend the algorithm given in \cite{EL96} (for permutation matrices) to generalized polyominoes.
Let $A$ be an $m\times n$ generalized polyomino in ${\mc C}(R,S)$. We use the {\it entrywise partial order} on positions 
defined by $(i,j)\le (k,l)$ provided $i\le k$ and $j\le l$.

\bigskip
\centerline{\bf Algorithm to reconstruct a generalized polyomino in ${\mathcal C}(R,S)$}
\centerline{\bf from its ranked essential set}
\begin{itemize}
\item[0.] Input: The nonnegative, integral vectors $R$, $S$ (with the same sum) and a candidate ranked essential set, i.e., some set (possibly empty) of positions of an $m \times n$ matrix $E$ and a corresponding nonnegative number in each of these positions, and an $m\times n$ matrix $A$  with all positions initially empty.

\item[1.]  Alternate between applying the following steps below to $E$ and ${A}$ until either all entries of
 ${A}$ have been determined,   or  some entry of  ${E}$ in a position of the essential set has become negative.
 \begin{itemize}
 \item[1a.] For each position ($i,j)$ in the essential set equal to 0, put a 0 in each position $(k,l)$ with $(k,l)\le (i,j)$ and then remove those positions from further consideration, including the positions of the essential set containing a 0. 
 \item[1b.] Put a 1 in a  minimal (in the partial order) position $(p,q)$ of ${A}$ and decrease by 1 the remaining positions $(u,v)$ of the essential set  with $(u,v)\ge (p,q)$.   Repeat until all minimal positions have a 1. If a row or column  has the correct sum, fill in the remaining positions of that row or column with zeros.
 \end{itemize}
\item[2.] If some entry of ${E}$ has become negative, the given set is not a ranked essential set of a generalized polyomino; otherwise, output ${A}$.
\end{itemize}

The following theorem generalizes Theorem \ref{lem:balazs}.

\begin{theorem} 
\label{lem:alg-works}
 The function that maps  a matrix in $\mc{C}_{m,n}(R,S)$ into its ranked essential set is injective. Moreover, the algorithm above reconstructs $A \in \mc{C}_{m,n}(R,S)$ from its ranked essential set in $O(mn)$ time.
\end{theorem}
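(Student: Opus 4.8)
The plan is to prove injectivity first and then verify that the algorithm correctly inverts the map in the claimed time bound; the two halves reinforce each other, since the algorithm's correctness gives an independent proof of injectivity.

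First I would reduce the general case to the polyomino case already treated in Theorem~\ref{lem:epi}. Given $A = P(A_1,\dots,A_k) \in \mc{C}_{m,n}(R,S)$, Lemma~\ref{lem:diagram} tells us that the diagram of $A$ decomposes along the block structure induced by $P$: the connected components of the diagram of $A$ are (up to shaded padding) the diagrams of the individual polyominoes $A_i$ sitting in their respective blocks, together with the unshaded zeros inherited from $P$'s own diagram. Consequently the ranked essential set of $A$ is the disjoint union of the ranked essential sets of the $A_i$, each translated into its block and each rank shifted by the number of $1$'s lying strictly to the northwest of that block (i.e.\ in earlier blocks of $P$). Since the block decomposition of $A$ is itself recoverable from the essential set data --- the essential positions that carry a nonzero rank, or that are "isolated" between two components, pin down where one block ends and the next begins, because convexity forces the SE-corners of components to lie on the staircase separating blocks --- and since within each block Theorem~\ref{lem:epi} says the polyomino $A_i$ is determined by its ranked essential set together with the local row/column sums, the whole matrix $A$ is determined. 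This gives injectivity of the map.

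Next I would argue correctness of the algorithm by an invariant. The invariant maintained through the loop in Step~1 is: every entry already placed in $A$ agrees with the unique matrix $A^\ast \in \mc{C}_{m,n}(R,S)$ having the given ranked essential set (when such a matrix exists), and every current entry of $E$ in an essential position equals the number of not-yet-forced $1$'s of $A^\ast$ weakly to its northwest. Step~1a is justified because an essential position holding $0$ certifies, exactly as in the proof of Theorem~\ref{lem:epi}, that the entire leading subarray it determines is zero in $A^\ast$; zeroing out $\{(k,l) : (k,l)\le(i,j)\}$ and deleting those positions is therefore correct, and it exposes new minimal empty positions. Step~1b is justified because, once the zero-region northwest of the diagram has been removed, convexity and connectivity within each block force each newly exposed minimal empty position to be a $1$ in $A^\ast$ (it is adjacent to the Ferrers boundary), and placing that $1$ decrements precisely the essential entries weakly to its southeast, preserving the invariant; the row/column completion sub-step is just the observation that once a line of $A^\ast$ has its full quota of $1$'s the rest of that line is zero. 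If the input was a genuine ranked essential set, the invariant guarantees the loop terminates having filled $A = A^\ast$; if not, some essential entry is driven negative, which Step~2 reports.

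For the running time I would note that each position of the $m\times n$ array is placed (as a $0$ or a $1$) exactly once and is never revisited; maintaining the set of current minimal empty positions and updating the essential entries can be done with total work proportional to the number of positions touched, plus $O(mn)$ for scanning, so the algorithm runs in $O(mn)$ time, matching and generalizing Theorem~\ref{lem:balazs}. The main obstacle I expect is the bookkeeping in Step~1b --- specifically, showing rigorously that the minimal empty positions exposed after each round of zeroing are genuinely forced to be $1$'s in $A^\ast$ rather than $0$'s. This is where one must invoke convexity together with the fact that every row and column of a generalized polyomino is nonempty (so an empty position immediately northwest-adjacent to already-placed material, and not yet zeroed by an essential $0$, cannot be a $0$ without violating either a rank condition further southeast or the consecutive-ones property); the cleanest route is to phrase it via the Ferrers-array structure of each block's diagram guaranteed by Theorem~\ref{lem:epi} and check that the algorithm's moves trace exactly that structure block by block.
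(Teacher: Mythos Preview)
Your decomposition of the ranked essential set is incorrect, and this undermines both halves of the argument. You write that ``the ranked essential set of $A$ is the disjoint union of the ranked essential sets of the $A_i$, each translated into its block and each rank shifted.'' But Lemma~\ref{lem:diagram} says more than that: the unshaded zero blocks inherited from $P$'s own diagram become full unshaded rectangles in the diagram of $A$, and these merge with the Ferrers diagrams of the adjacent $A_i$. The merged components acquire \emph{new} southeast corners that belong to no $A_i$. Concretely, in Example~\ref{ex:genpoly2} the position $(6,3;0)$ lies in the essential set of $P(A_1,A_2,A_3)$; it is the SE corner of the $6\times 3$ unshaded block coming from the positions $(1,1),(2,1)$ of $P$'s diagram, and it is not the translate of any essential position of $A_1$, $A_2$, or $A_3$. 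So the essential set does not decompose the way you claim, and the reduction to Theorem~\ref{lem:epi} does not go through directly.

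A second, related gap is the assertion that ``the block decomposition of $A$ is itself recoverable from the essential set data.'' When the essential set is empty (for instance $P=I_k$ with each $A_i$ having a SE-source), there is nothing in the essential set to read the block boundaries from; you must use $R$ and $S$, and doing so amounts to reconstructing $A$, which is precisely what is to be proved. Your Step~1b justification then inherits this circularity: you invoke ``connectivity within each block'' to force the minimal empty positions to be $1$'s, but the algorithm does not know the blocks. You flag this yourself as the ``main obstacle,'' and the proposed fix (trace the Ferrers structure block by block via Theorem~\ref{lem:epi}) again presupposes the block decomposition.

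The paper avoids all of this by inducting on $m+n$ rather than decomposing globally. At each stage one looks at a minimal element $(k,l)$ of the essential set: if its rank is $0$ (or more generally, after handling all rank-$0$ minimal elements), a Ferrers region of forced zeros is identified and the positions immediately adjacent to it are forced $1$'s; if the minimal rank is positive (or the essential set is empty), then $a_{11}=1$ and convexity together with the line sums fill in the entire first polyomino block. Either way one obtains a direct sum $A=A_1\oplus A_2$ with $A_2$ strictly smaller, and the induction hypothesis finishes. The block structure is thus \emph{discovered} one summand at a time rather than assumed at the outset, which is exactly the missing ingredient in your approach.
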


\begin{proof}  
The proof is by induction on the number $m+n$ of lines. Let $f$ denote the function defined on $\mc{C}_{m,n}(R,S)$ that maps a matrix $A \in \mc{C}_{m,n}(R,S)$ into its ranked essential set $f(A)=E$.

Consider first the case when $E$ is empty. Let $A =[a_{ij}] \in f^{-1}(E)$. Then $a_{11}=1$ (otherwise $(1,1)$ would be in the diagram and $E$ would be nonempty), and the first row and column are determined by convexity ($r_1$ and $s_1$ ones, respectively, followed by zeros). Then the first $r_1$ columns and the first $s_1$ rows are also determined by convexity. We continue like this until $A$ is partially determined, uniquely, as a direct sum $A=A_1 \oplus A_2$. Here $A_2$ has smaller size than $A$, and by induction it is uniquely determined by its ranked essential set, which must be empty. Therefore $A$ is uniquely determined, i.e., $|f^{-1}(E)|=1$, as desired. (We see that, in the case of an empty essential set of a generalized polyomino $A$,
the corresponding permutation matrix $P$ is an identity matrix.)

Next, assume $E$ is nonempty. Let $(k,l)$ be a minimal element in the entrywise partial order of our ranked essential set and let $r$ be its rank.  We consider two cases, namely $r\ge 1$ and $r=0$. 

If $r \ge 1$, then $a_{11}=1$ and we can argue as in the previous paragraph and uniquely fill in ones and zeros by convexity. Note that each such 1 must be in some position $(i,j)$ where $i<k$ and $j<l$ because any matrix $A$ with $E=f(A)$ has zeros in row $k$ and column $l$ up to position $(k,l)$. Therefore $A$ is a direct sum $A=A_1 \oplus A_2$ and we are done by induction as in the previous paragraph. Note that the rank associated with each position of the essential set in the region occupied  by $A_2$ is  reduced by $r$.

Next, consider the case $r=0$. Let $F$ be the set of positions $(i,j) \le (p,q)$ for some minimal position $(p,q)$ in the minimal essential set with rank $0$. Then $F$ is  the set of positions of a Ferrers array, and $a_{ij}=0$ for every $(i,j) \in F$ and every $A=[a_{ij}] \in f^{-1}(E)$. Next, all the positions adjacent to $F$ are uniquely determined by convexity and line sums. Then $A$ is a direct sum and we proceed by induction as above.
This proves that the function $f$ is injective, and the first part of the theorem holds. 

The correctness of the algorithm is seen from the arguments above, because the uniquely determined parts of the matrix are given values as in the algorithm. So, if $E=f(A)$ for some $A \in \mc{C}_{m,n}(R,S)$, the algorithms computes $A$. If the algorithm stops and does not return any matrix in $\mc{C}_{m,n}(R,S)$, no such matrix exists for the given set $E$. That the number of steps is $O(mn)$ is clear from the algorithm.
\end{proof}

\begin{example}{\rm \label{ex:algorithm} We apply the algorithm to the generalized polyomino $P(A_1,A_2,A_3)$ 
\[\left[\begin{array}{cc|cc|cccc}
&&1&1&&&&\\ 
&&0&1&&&&\\ 
&&0&1&&&&\\  \hline
&&&&0&1&0&0\\ 
&&&&1&1&0&0\\ 
&&&&0&1&1&1\\ \hline
0&1&&&&&&\\ 
1&1&&&&&&\end{array}\right],\quad \mbox{ where }
P=\left[\begin{array}{ccc}
0&1&0\\ 0&0&1\\ 1&0&0\end{array}\right].\]
Note that we do not assume that $P$ is known, only that we are dealing with  a generalized polyomino  with  known row sum vector $R=(2,1,1,1,2,3,1,2)$ and known column sum vector  $S=(1,2,1,3,1,3,1,1)$. The ranked essential set is calculated to be as specified in the initial matrix below; now it is convenient to shade the positions of the essential set $E$. To simplify things, we superimpose $E$ on $A$, and when a position and rank of the ranked essential set has been fully taken into account, we remove its shading.
\[
\left[\begin{array}{c|c|c|c|c|c|c|c}
&&\phantom{1}&\phantom{1}&\phantom{1}&\phantom{1}&\phantom{1}\\ \hline
&&&&&&\\ \hline
&&&&&&\\  \hline
&&&&\cellcolor[gray]{0.8}4&&\\ \hline
&&&&&&&\\ \hline
&\cellcolor[gray]{0.8}0&&&&&\\ \hline
\cellcolor[gray]{0.8}0&&&&&&\\ \hline
&&&&&&\end{array}\right]\rightarrow 
\left[\begin{array}{c|c|c|c|c|c|c|c}
0&0&\phantom{1}&\phantom{1}&\phantom{1}&\phantom{1}&\phantom{1}&\phantom{1}\\ \hline
0&0&&&&&&\\ \hline
0&0&&&&&&\\  \hline
0&0&&&\cellcolor[gray]{0.8}4&&&\\ \hline
0&0&&&&&&\\ \hline
0&0&&&&&&\\ \hline
0&&&&&&&\\ \hline
&&&&&&&\end{array}\right]
\rightarrow\]
\[\left[\begin{array}{c|c|c|c|c|c|c|c}
0&0&1&&&&&\\ \hline
0&0&0&&&&&\\ \hline
0&0&0&&&&&\\  \hline
0&0&0&&\cellcolor[gray]{0.8}3&&&\\ \hline
0&0&0&&&&&\\ \hline
0&0&0&&&&&\\ \hline
0&1&0&0&0&0&0&0\\ \hline
1&&0&&&&&\end{array}\right]
\rightarrow
\left[\begin{array}{c|c|c|c|c|c|c|c}
0&0&1&1&0&0&0&0\\ \hline
0&0&0&&&&&\\ \hline
0&0&0&&&&&\\  \hline
0&0&0&&\cellcolor[gray]{0.8}2&&&\\ \hline
0&0&0&&&&&\\ \hline
0&0&0&&&&&\\ \hline
0&1&0&0&0&0&0&0\\ \hline
1&1&0&0&0&0&0&0\end{array}\right]\rightarrow
 \]
 \[
\left[\begin{array}{c|c|c|c|c|c|c|c}
0&0&1&1&0&0&0&0\\ \hline
0&0&0&1&0&0&0&0\\ \hline
0&0&0&&&&&\\  \hline
0&0&0&&\cellcolor[gray]{0.8}1&&&\\ \hline
0&0&0&&&&&\\ \hline
0&0&0&&&&&\\ \hline
0&1&0&0&0&0&0&0\\ \hline
1&1&0&0&0&0&0&0\end{array}\right]\rightarrow
\left[\begin{array}{c|c|c|c|c|c|c|c}
0&0&1&1&0&0&0&0\\ \hline
0&0&0&1&0&0&0&0\\ \hline
0&0&0&1&0&0&0&0\\  \hline
0&0&0&0&\cellcolor[gray]{0.8}0&&&\\ \hline
0&0&0&0&&&&\\ \hline
0&0&0&0&&&&\\ \hline
0&1&0&0&0&0&0&0\\ \hline
1&1&0&0&0&0&0&0\end{array}\right]\rightarrow\]
\[
\left[\begin{array}{c|c|c|c|c|c|c|c}
0&0&1&1&0&0&0&0\\ \hline
0&0&0&1&0&0&0&0\\ \hline
0&0&0&1&0&0&0&0\\  \hline
0&0&0&0&0&1&0&0\\ \hline
0&0&0&0&1&&&\\ \hline
0&0&0&0&0&&&\\ \hline
0&1&0&0&0&0&0&0\\ \hline
1&1&0&0&0&0&0&0\end{array}\right]\rightarrow
\left[\begin{array}{c|c|c|c|c|c|c|c}
0&0&1&1&0&0&0&0\\ \hline
0&0&0&1&0&0&0&0\\ \hline
0&0&0&1&0&0&0&0\\  \hline
0&0&0&0&0&1&0&0\\ \hline
0&0&0&0&1&&&\\ \hline
0&0&0&0&0&&&\\ \hline
0&1&0&0&0&0&0&0\\ \hline
1&1&0&0&0&0&0&0\end{array}\right]\rightarrow \]
\[
\left[\begin{array}{c|c|c|c|c|c|c|c}
0&0&1&1&0&0&0&0\\ \hline
0&0&0&1&0&0&0&0\\ \hline
0&0&0&1&0&0&0&0\\  \hline
0&0&0&0&0&1&0&0\\ \hline
0&0&0&0&1&1&0&0\\ \hline
0&0&0&0&0&&&\\ \hline
0&1&0&0&0&0&0&0\\ \hline
1&1&0&0&0&0&0&0\end{array}\right]\rightarrow
\left[\begin{array}{c|c|c|c|c|c|c|c}
0&0&1&1&0&0&0&0\\ \hline
0&0&0&1&0&0&0&0\\ \hline
0&0&0&1&0&0&0&0\\  \hline
0&0&0&0&0&1&0&0\\ \hline
0&0&0&0&1&1&0&0\\ \hline
0&0&0&0&0&1&&\\ \hline
0&1&0&0&0&0&0&0\\ \hline
1&1&0&0&0&0&0&0\end{array}\right]\rightarrow\]
\[
\left[\begin{array}{c|c|c|c|c|c|c|c}
0&0&1&1&0&0&0&0\\ \hline
0&0&0&1&0&0&0&0\\ \hline
0&0&0&1&0&0&0&0\\  \hline
0&0&0&0&0&1&0&0\\ \hline
0&0&0&0&1&1&0&0\\ \hline
0&0&0&0&0&1&1&\\ \hline
0&1&0&0&0&0&0&0\\ \hline
1&1&0&0&0&0&0&0
\end{array}\right]
\rightarrow
\left[\begin{array}{c|c|c|c|c|c|c|c}
0&0&1&1&0&0&0&0\\ \hline
0&0&0&1&0&0&0&0\\ \hline
0&0&0&1&0&0&0&0\\  \hline
0&0&0&0&0&1&0&0\\ \hline
0&0&0&0&1&1&0&0\\ \hline
0&0&0&0&0&1&1&1\\ \hline
0&1&0&0&0&0&0&0\\ \hline
1&1&0&0&0&0&0&0
\end{array}\right]
\]
}\endproof \end{example}

 Concerning Theorem \ref{lem:alg-works} the requirement that the row and column sums are known is essential. For instance, many different convex matrices have an  empty essential set, e.g., the two matrices where the first has its ones in the first row, and the second has its ones in the first column. However, then the row and column sums do not coincide. Also, the algorithm can possibly be made  more efficient in that
as soon as one places a 1, then because of convexity, that row to the east, and that column to the south can be completed knowing $R$ and $S$.

\section{Subclasses  and characterizations }
\label{sec:subclass-character}

 The well-known Gale-Ryser theorem (see e.g. \cite{BR91}) says that there is a $(0,1)$-matrix $A$ with given row sum vector $R$ and column sum vector $S$ if and only if $S \preceq R^*$. Here $\preceq$ denotes majorization order and $R^*$ is the conjugate of $R$. If one also requires that the matrix $A$ is convex, the situation is much more complicated. In fact (see a previous remark), it is {\em NP}-complete to decide for given $R$ and $S$ if there is a convex $(0,1)$-matrix with $R$ and $S$ as row and column sum vectors. The same is true for polyominos (obtained by adding the requirement of connectedness); see \cite{W} for more on these complexity questions. With this background it is natural to ask  if there are subclasses of $\mc{C}_{m,n}(R,S)$ where it is possible to characterize when the class is nonempty. We study this question in this section. 

First we consider the special case  $\mc{C}_{m,n}(R,S)$, where
$R=e$, the all 1's vector of size $m$, and $S=(s_1,s_2,\ldots,s_n)$ is   a nonnegative, integral vector.
\begin{lemma}
\label{lem:convex-e}
 The set $\mc{C}_{m,n}(e,S)$ is nonempty if and only if $S=(s_1,s_2,\ldots,s_n)$ satisfies $\sum_{j=1}^n s_j=m$. 
\end{lemma}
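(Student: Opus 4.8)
The statement is that $\mc{C}_{m,n}(e,S)$ is nonempty if and only if $\sum_{j=1}^n s_j = m$. The forward direction is immediate: if $A \in \mc{C}_{m,n}(e,S)$, then $A \in \mc{A}(e,S)$, so the total number of $1$'s equals $\sum_i r_i = m$ on the one hand and $\sum_j s_j$ on the other, forcing $\sum_{j=1}^n s_j = m$. (This needs no convexity and no assumption that $S$ is monotone.) So the content is the converse: given only that $S$ is a nonnegative integral vector with $\sum_j s_j = m$, I must exhibit a convex $(0,1)$-matrix with exactly one $1$ in each row and column sums $s_1, \dots, s_n$.

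The plan is an explicit construction. Since $R = e$, each of the $m$ rows has a single $1$; the only freedom is which column that $1$ lies in, and convexity of columns forces each column's $1$'s to occupy a consecutive block of rows, while row-convexity is automatic (a single $1$ per row is trivially consecutive). So I would simply fill the rows from top to bottom, placing the $1$'s of column $1$ in rows $1, \dots, s_1$, the $1$'s of column $2$ in rows $s_1+1, \dots, s_1+s_2$, and in general the $1$'s of column $j$ in rows $\big(\sum_{i<j} s_i\big)+1, \dots, \sum_{i\le j} s_i$. Because $\sum_j s_j = m$, these blocks partition the row index set $\{1,\dots,m\}$ exactly, so every row receives exactly one $1$ (columns with $s_j = 0$ simply contribute an empty block and stay zero). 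Each column's support is by construction a contiguous interval of rows, hence column-convex; row-convexity is trivial as noted. Thus this matrix lies in $\mc{C}_{m,n}(e,S)$, and the class is nonempty.

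There is essentially no obstacle here — the "hard part," such as it is, is merely to observe that in the $R=e$ case convexity imposes no real constraint beyond grouping each column's $1$'s contiguously, which one can always arrange by listing the columns in order down the rows. One may remark that the resulting matrix is in fact a generalized polyomino (a direct sum, over the columns $j$ with $s_j > 0$, of $s_j \times 1$ all-ones blocks arranged along the diagonal), consistent with the earlier discussion; and when $S$ is in addition nonincreasing this matrix is the Ferrers-diagram matrix $F(S^\top)$ mentioned in Example \ref{ex:order}.
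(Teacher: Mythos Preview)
Your proof is correct and follows essentially the same approach as the paper: the forward direction is the trivial counting argument, and for the converse you construct exactly the same matrix as the paper does (placing the $1$'s of column $j$ in the consecutive block of rows $\sum_{i<j}s_i+1,\dots,\sum_{i\le j}s_i$), with slightly more detail in verifying convexity.
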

\begin{proof}
If $\mc{C}_{m,n}(e,S)$ is nonempty, then $\sum_j s_j= \sum_i r_i=m$. Conversely, if this condition holds, construct the $(0,1)$-matrix  $A=[a_{ij}]$ by $a_{11}= \cdots = a_{s_1,1}=1$, $a_{s_1+1,2}= \cdots = a_{s_1+s_2,2}=1$ etc, while the other entries are zero. Then $A \in \mc{C}_{m,n}(e,S)$.
\end{proof}

Therefore, when $R=e$, convexity places no further restriction on the possible column sum vectors. Similar conclusions hold when $S$ is the all ones vector. 

Next, we consider $R=ke=(k, k, \ldots, k)$ for some natural number $k\ge 2$. Here the situation is different, as the next example shows.


\begin{example}\label{ex:rowconstant}{\rm 
 Let $R=(2,2,2)$ and $S=(3,2,1)$. Then ${\mathcal C}(R,S)=\emptyset$, but with the permutation of $S$ given by $(1,3,2)$ we have the following convex matrix
\[\left[\begin{array}{c|c|c}
1&1& \\ \hline
&1&1 \\ \hline
&1&1 \end{array}\right].\]
If  $R=(3,3,3,3,3)$ and $S=(4,4,3,3,1)$,
 such a permutation does not exist in this case, since there are three 1's in row 1 and three 1's in row 5, and so by convexity we must have a column of sum 5.}\endproof\end{example}

 Let again $R=ke=(k, k, \ldots, k)$ and let $A$ be a $(0,1)$-matrix with row sum $R$. Then permuting rows of $A$ does not have any effect on row or column sums, but it may affect convexity as Example \ref{ex:rowconstant} shows. We do have the following lemma.

\begin{lemma}
 \label{lem:2-row-conv}
 Let $k$ be a positive integer,  and let $S$ be a nonnegative integral vector of length $n$.  Then $\mc{C}_{m,n}(ke,S)$ is nonempty if and only if there is a row convex matrix in $\mc{A}_{m,n}(ke,S)$. 
\end{lemma}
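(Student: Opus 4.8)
The nontrivial direction is: if there is a row convex matrix in $\mc{A}_{m,n}(ke,S)$, then there is a convex (row and column convex) matrix in $\mc{A}_{m,n}(ke,S)$. The converse is immediate since a convex matrix is in particular row convex. My plan is to start from a row convex matrix $A \in \mc{A}_{m,n}(ke,S)$ and reorder its rows so as to also achieve column convexity, exploiting the fact that every row has exactly $k$ consecutive ones, so each row is completely described by the single integer giving the starting column of its block of ones (an element of $\{1,2,\ldots,n-k+1\}$).

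**Key steps.** First I would encode each row $i$ of the row convex matrix $A$ by the index $\ell_i$ of the leftmost column containing a $1$ in that row; since $r_i = k$, row $i$ then has $1$'s exactly in columns $\ell_i, \ell_i+1, \ldots, \ell_i+k-1$. Permuting rows does not change $R = ke$ or $S$, and it preserves row convexity trivially, so the claim reduces to: one can order the multiset $\{\ell_1,\ldots,\ell_m\}$ of starting indices so that the resulting matrix is column convex. The natural candidate is to sort the rows in nondecreasing order of $\ell_i$. Then I would verify column convexity directly: fix a column $j$, and note that row $i$ (with starting index $\ell_i$) has a $1$ in column $j$ iff $\ell_i \le j \le \ell_i + k - 1$, i.e.\ iff $j-k+1 \le \ell_i \le j$. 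Since the rows are sorted so that $\ell_i$ is nondecreasing in $i$, the set of rows $i$ with $\ell_i$ in the interval $[j-k+1, j]$ forms a contiguous block of consecutive indices. Hence the $1$'s in column $j$ occur consecutively, which is exactly column convexity. Combined with row convexity (unaffected by row permutation), the sorted matrix lies in $\mc{C}_{m,n}(ke,S)$, proving the lemma.

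**Main obstacle.** There is no serious obstacle: the only point that needs care is the observation that for the constant-row-sum case $R = ke$, each row of a row convex matrix is an interval of length exactly $k$, so a row is determined by one number, and "sort by starting index" then makes each column an interval automatically because membership of $\ell_i$ in a fixed-length window is a consecutive condition once the $\ell_i$ are monotone. One should just be slightly careful to phrase the interval condition $j-k+1 \le \ell_i \le j$ correctly at the boundary columns (where part of the window $[j-k+1,j]$ falls outside $\{1,\ldots,n-k+1\}$), but this does not affect the consecutiveness argument. I would also remark, for emphasis, that the same argument shows more: among all row orderings, the sorted one is essentially the canonical convex representative, and that the analogous statement with the roles of rows and columns (and $S = ke$) holds by transposition.
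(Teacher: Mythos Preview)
Your proposal is correct and follows essentially the same approach as the paper: sort the rows of a row convex matrix in $\mc{A}_{m,n}(ke,S)$ by the position $\ell_i$ of the leftmost $1$, and observe that the resulting staircase matrix is column convex. In fact you give more detail than the paper, which simply asserts that column convexity is ``easily verified'' from the staircase pattern, whereas you spell out the interval condition $j-k+1\le \ell_i\le j$ that makes this immediate.
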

\begin{proof}
 Since a convex matrix is also row convex, we only need to prove that if $\mc{A}_{m,n}(R,S)$ contains a 
 row convex matrix $A$, then there is also a convex matrix in that class. Let $l_i$  denote the position of the left-most 1 in row $i$ of $A$ ($i \le m$). Reorder rows in $A$ according to increasing value of  $l_i$ ($i \le m$), and let $A'$ be the resulting matrix. Clearly $A'$ is row-convex, and it has a staircase pattern, with $k$ ones in every row. From this one can easily verify that it is also column convex. Thus $A' \in \mc{C}_{m,n}(R,S)$, as desired.
\end{proof}


Next, we consider the special case $k=2$. 
The following result characterizes the possible column sums of convex $m \times n$ $(0,1)$-matrices with two ones in every row.

\begin{theorem}
\label{thm:2-S-char}
Let $S=(s_1,s_2, \ldots, s_n)$ be a nonnegative integral vector. Then $\mc{C}_{m,n}(2e,S)$ is nonempty if and only if $S$ satisfies $\sum_{j=1}^n s_j=2m$ and 
\begin{equation}
 \label{eq:S-char}
   \sum_{i=1}^j (-1)^{j-i} s_i \;\;\;(j=1, 2, \ldots, n-1).
\end{equation}
\end{theorem}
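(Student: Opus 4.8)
The plan is to reduce to a counting problem about row convex matrices and then solve an explicit forward recurrence; the key tool is Lemma~\ref{lem:2-row-conv}, which says that $\mc{C}_{m,n}(2e,S)\ne\emptyset$ precisely when $\mc{A}_{m,n}(2e,S)$ contains a row convex matrix, so I would work with row convex matrices throughout. A row convex $(0,1)$-matrix all of whose row sums equal $2$ is exactly one in which, for each row $i\in\{1,\dots,m\}$, the two $1$'s of row $i$ lie in a pair of consecutive columns $t_i,t_i+1$ with $t_i\in\{1,\dots,n-1\}$; conversely any such assignment $i\mapsto t_i$ yields such a matrix. Since permuting rows affects neither $R$, nor $S$, nor row convexity, such a matrix is determined up to row order by the left-endpoint multiplicities $x_j:=\#\{i : t_i=j\}$ for $j=1,\dots,n-1$, and every choice of nonnegative integers $x_1,\dots,x_{n-1}$ with $\sum_{j}x_j=m$ is realized. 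So $\mc{C}_{m,n}(2e,S)$ is nonempty if and only if there exist such $x_j$ yielding the prescribed column sums.

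Next I would translate the column-sum requirement into equations. Counting $1$'s columnwise gives $s_1=x_1$, $s_j=x_{j-1}+x_j$ for $2\le j\le n-1$, and $s_n=x_{n-1}$. Setting $x_0:=0$, the relations $x_j=s_j-x_{j-1}$ determine the $x_j$ uniquely and unroll to the closed form $x_j=\sum_{i=1}^{j}(-1)^{j-i}s_i$, the alternating sum appearing in~(\ref{eq:S-char}). Hence a row convex matrix with column sum vector $S$ exists iff these forced numbers satisfy: (i) $x_j\ge 0$ for $j=1,\dots,n-1$; (ii) the last relation $x_{n-1}=s_n$, equivalently $\sum_{i=1}^{n}(-1)^{n-i}s_i=0$; and (iii) $\sum_{j=1}^{n-1}x_j=m$. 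Condition (i) is exactly~(\ref{eq:S-char}) (the case $j=1$ being vacuous as $s_1\ge 0$). A short computation with the recurrence gives $\sum_{j=1}^{n}s_j=2\bigl(\sum_{j=1}^{n-1}x_j\bigr)-x_{n-1}+s_n$, so, once (ii) holds, (iii) is equivalent to $\sum_{j=1}^{n}s_j=2m$.

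To finish: for necessity, any $A\in\mc{C}_{m,n}(2e,S)$ is row convex, so its left-endpoint multiplicities satisfy the displayed relations, forcing $\sum_j s_j=2m$, the inequalities (\ref{eq:S-char}), and the boundary identity in (ii). For sufficiency, define $x_j:=\sum_{i=1}^{j}(-1)^{j-i}s_i$; by hypothesis (i)--(iii) hold, so ordering $m$ rows so that exactly $x_j$ of them have left endpoint $j$ produces a row convex --- and, as in the proof of Lemma~\ref{lem:2-row-conv}, hence column convex --- matrix in $\mc{C}_{m,n}(2e,S)$. I do not expect a genuine obstacle here; the only subtlety is to keep track of all of (i)--(iii), and in particular not to drop condition (ii) (the vanishing of the full alternating sum), which is not implied by $\sum_j s_j=2m$ together with (i) --- the smallest witness being $n=2$, where row convexity with $R=2e$ forces $S=(m,m)$. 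I would use that small case as a sanity check on the final form of~(\ref{eq:S-char}).
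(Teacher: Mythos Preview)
Your approach is essentially the paper's: both encode a staircase (row-convex) matrix in $\mc{A}_{m,n}(2e,S)$ by the left-endpoint multiplicities (your $x_j$, the paper's $k_j$), obtain the recurrence $s_j=k_{j-1}+k_j$ with $k_0=0$, solve it as the alternating sum $k_j=\sum_{i\le j}(-1)^{j-i}s_i$, and read off nonnegativity. The paper works directly with a staircase matrix in $\mc{C}_{m,n}(2e,S)$ rather than first invoking Lemma~\ref{lem:2-row-conv}, but that difference is cosmetic.

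Where you go beyond the paper is in isolating condition (ii), the boundary identity $x_{n-1}=s_n$ (equivalently $\sum_{i=1}^{n}(-1)^{n-i}s_i=0$). You are right that this is \emph{not} implied by $\sum_j s_j=2m$ together with the inequalities~(\ref{eq:S-char}). Your $n=2$ test case is already decisive; one can also take $n=3$, $m=3$, $S=(0,2,4)$ (or $n=2$, $m=2$, $S=(1,3)$ if one wants all $s_j>0$): the stated hypotheses hold, yet every admissible row forces $S=(0,3,3)$ (respectively $(2,2)$). The paper's converse glosses over exactly this point: it asserts that the staircase matrix built from $k_1,\dots,k_{n-1}$ lies in $\mc{C}_{m,n}(2e,S)$ ``as $\sum_{j=1}^n s_j=2m$'', without checking that this matrix has $m$ rows and last column sum $s_n$; given $\sum_j s_j=2m$, both of those checks are equivalent to your (ii). So your extra bookkeeping is warranted, and the theorem as printed is missing a hypothesis.
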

\begin{proof}
Assume that the class $\mc{C}_{m,n}(2e,S)$ is nonempty. Then $\sum_j s_j=\sum_i r_i=2m$. Moreover, $\mc{C}_{m,n}(2e,S)$ contains a matrix $A$ with staircase pattern  so the position of the leftmost 1 is weakly increasing by rows.  Consequently there are nonnegative  integers $k_1, k_2, \ldots, k_{n-1}$ such that 
the $k_1$ first rows has initial 1 in column 1, the next $k_2$ rows  has initial 1 in column 2, etc.  Since each row has two consecutive ones the column sums are given by 
\begin{equation}
\label{eq:s-1}
    s_j=k_{j-1}+k_j \;\;\;(j=1, 2, \ldots, n)
\end{equation}
where $k_0=k_n=0$. Therefore $k_j=s_j-k_{j-1}$ for each $j$ and 
\[
  k_1=s_1, \; k_2=s_2-s_1, \; k_3=s_3-k_2=s_3-s_2+s_1, 
\]
and in general $k_j$ is an alternating sign sum of $s_1, s_2, \ldots, s_j$
\begin{equation}
 \label{eq:s-2}
   k_j=\sum_{i=1}^j (-1)^{j-i} s_i \;\;\;(j=1, 2, \ldots, n-1).
\end{equation}
Since all these numbers are nonnegative, we obtain (\ref{eq:S-char}). Conversely, assume  $\sum_{j=1}^n s_j=2m$ and (\ref{eq:S-char}) hold, and define $k_j$ ($j \le n-1$) by (\ref{eq:s-2}). Then $k_j\ge 0$ ($j \le n-1$) and, also,  (\ref{eq:s-1}) holds. So, if we construct a staircase matrix $A$ based on the $k_j$'s as above, this matrix will be in $\mc{C}_{m,n}(2e,S)$ as $\sum_{j=1}^n s_j=2m$, and the proof is complete.
\end{proof}

\begin{example}
{\rm 
Let $k=2$ and $S=(3,2,1)$. In Example \ref{ex:rowconstant} we saw that $\mc{C}_{3,3}(2e,S)$ is empty. In fact, the inequalities in (\ref{eq:S-char}) are 
\[
   s_1 \ge 0, \;\; s_2-s_1 \ge 0, \;\; s_3-s_2+s_1 \ge 0
\]
and the second inequality is violated: $2-3=-1 \not \ge 0$. If, however, $S=(1,3,2)$, then all these inequalities hold, and the matrix 
\[
\left[
\begin{array}{c|c|c}
1&1& \\ \hline
&1&1 \\ \hline
&1&1 
\end{array}
\right]
\]
lies in $\mc{C}_{3,3}(2e,S)$. Here $k_1=1$, $k_2=2$ and $k_3=0$.
} \endproof
\end{example}

The next result deals with convex matrices with both equal row sums and equal column sums. 

\begin{theorem}
\label{pr:convex-k-e}
 Let $k \le m$, $l \le n$ be positive integers. 
 The class $\mc{C}_{m,n}(ke,le)$ is nonempty if and only if  $m=pk$ and $n=pl$ for some integer $p$.
 \end{theorem}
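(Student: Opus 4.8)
The plan is to analyze the staircase structure of a convex matrix in $\mc{C}_{m,n}(ke,le)$ by tracking, for each row, the column index of its leftmost $1$. First I would establish the easy direction: if $m=pk$ and $n=pl$, then the block-diagonal matrix $J_{k,l} \oplus J_{k,l} \oplus \cdots \oplus J_{k,l}$ ($p$ copies) is convex, has every row sum equal to $k$ and every column sum equal to $l$, so the class is nonempty. The real content is the converse.

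For the converse, suppose $A \in \mc{C}_{m,n}(ke,le)$. Since $R=ke$ is constant, Lemma~\ref{lem:2-row-conv} (more precisely, its proof technique) lets me reorder the rows of $A$ so that the leftmost $1$'s occur in weakly increasing columns; the resulting matrix $A'$ is still in $\mc{C}_{m,n}(ke,le)$ because reordering rows preserves all row and column sums, and the argument in Lemma~\ref{lem:2-row-conv} shows $A'$ is convex. So without loss of generality $A$ has a staircase shape: there are nonnegative integers $k_1,\dots,k_n$ (with $k_1+\cdots+k_n=m$) so that the first $k_1$ rows start their block of $k$ ones in column $1$, the next $k_2$ rows start in column $2$, and so on; here $k_j=0$ is allowed only if no later $k_{j'}>0$ with an intervening gap would break column-convexity — in fact column-convexity forces the support of the nonzero $k_j$'s to be an interval of column indices, and since column $1$ and column $n$ must be nonzero, we get $k_1,k_n\ge 1$ and all $k_j\ge 1$. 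Each row contributes $k$ consecutive ones, so column $j$ receives contributions exactly from those rows whose starting column lies in $\{j-k+1,\dots,j\}$, giving
\[
   s_j \;=\; \sum_{i=\max(1,j-k+1)}^{\min(j,n-k+1)} k_i \;=\; l \qquad (j=1,2,\dots,n).
\]
Comparing $s_j$ and $s_{j+1}$ for $1\le j\le n-1$ yields $k_{j-k+1}=k_{j+1}$ whenever both indices are in range (with the convention $k_i=0$ for $i\le 0$ or $i>n-k+1$ actually being excluded by the interval argument, so one should be careful here), i.e. the sequence $(k_j)$ is periodic with period $k$ in the interior. Then $s_1=k_1+\cdots+k_{\min(k,n-k+1)}$... — the cleanest route is: from $s_j=s_{j+1}=l$ we get $k_{j+1}=k_{j-k+1}$ for all valid $j$, so $k_1=k_{k+1}=k_{2k+1}=\cdots$, etc., and combined with $k_1+\cdots+k_n=m$ and each "window sum" equalling $l$, a short counting argument forces $n$ to be a multiple of $k$, say $n=qk$...

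Here is the main obstacle and I expect it to be the crux: the telescoping/periodicity argument must be pushed to conclude simultaneously that $k \mid n$ and that the common value $p := n/k$ satisfies $m = p k$ as well (equivalently $n = p l$). The clean way: the window condition says each sum of $k$ consecutive $k_i$'s (in the appropriate ranges) equals $l$; summing all such windows and using $\sum_i k_i = m$ together with $\sum_j s_j = nl = km$ gives one relation, and the periodicity $k_i = k_{i+k}$ gives another; together these should force $k_1=k_2=\cdots$ (all equal, say to some integer $c$), hence $n-k+1$... no — more directly, once $k_i=k_{i+k}$ for all $i$ and the support is a full interval, and each window of length $k$ sums to $l$, we get $k_1+\cdots+k_k=l$; but also periodicity forces the number of indices $n$ to be $\equiv$ something; the genuinely careful bookkeeping is to show the $k_i$ are all equal. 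If all $k_i=c$ then $kc=l$... that's false in general ($l$ need not be divisible by $k$), which signals that the staircase need not be this rigid and the periodicity statement needs the window sums more carefully. The honest resolution: from $k_{j+1}=k_{j-k+1}$ one deduces the sequence is periodic with period $k$ on its interval of support $\{1,\dots,n\}$ only if that interval length $n$ is itself a multiple of $k$ — if not, the "wrap" at the ends (where $k_i$ is truncated) forces a contradiction with all column sums equalling $l$ unless the truncated partial windows still sum to $l$, which is impossible for a strictly shorter positive window. This yields $k \mid n$; write $n=pk$. Then $nl = km$ gives $m = pl$... wait, we want $m=pk$. Re-examine: $nl = km$ and $n=pk$ give $pkl = km$, so $m = pl$, hence also (from $n=pk$, $m=pl$) the symmetric conclusion. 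Relabeling $p$ appropriately (or noting $m=pl$ and $n=pk$ is the mirror statement under transpose, and the theorem as stated wants $m=pk$, $n=pl$), one applies the same argument to $A^{\mathsf T} \in \mc{C}_{n,m}(le,ke)$ to get the complementary divisibility, and combining the two gives exactly $m=pk$, $n=pl$ for a common $p$. I would present the transpose symmetry explicitly to close the gap cleanly rather than wrestle with both indices at once.
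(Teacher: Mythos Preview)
Your approach differs genuinely from the paper's. The paper does not reorder rows or set up a recurrence; it extracts a block directly: the $1$'s in column $1$ occupy a run of consecutive rows (by column convexity), and each such row, having a $1$ in column $1$ and exactly $k$ consecutive $1$'s, must have its $1$'s in columns $1,\ldots,k$. This produces a full rectangular $J$-block whose rows and columns are already saturated; delete those rows and columns and iterate on the smaller convex matrix. The process must exhaust rows and columns simultaneously, which forces the common divisibility.

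Your staircase route can be salvaged, but there is a real gap exactly at the point you flag as the crux. The claim that column-convexity forces all $k_j\ge 1$ is false: after the staircase reordering the matrix is automatically column-convex for \emph{any} nonnegative $k_j$'s (this is precisely the content of the proof of Lemma~\ref{lem:2-row-conv}), and in fact most $k_j$ will turn out to be $0$. What actually resolves the crux is the \emph{boundary} window structure you write down but never exploit: the window for $s_1$ has length one, so $k_1=s_1=l$; then $s_2=k_1+k_2=l$ gives $k_2=0$, and inductively $k_2=\cdots=k_k=0$. Combined with the interior periodicity $k_j=k_{j-k}$ (from $s_j=s_{j-1}$ for $k+1\le j\le n-k+1$), the pattern $(l,0,\ldots,0)$ of length $k$ repeats; the right-end condition $s_n=k_{n-k+1}=l$ then forces $n-k+1\equiv 1\pmod k$, i.e.\ $k\mid n$, and summing the $k_j$ gives $m=(n/k)\,l$. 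Both divisibilities fall out together, so the transpose detour at the end is unnecessary. (Your final confusion between $m=pk$ and $m=pl$ is not entirely your fault: counting ones gives $mk=nl$, which is incompatible with $m=pk$, $n=pl$ unless $k=l$; there is evidently a $k\leftrightarrow l$ slip in the statement and proof as printed.)
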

\begin{proof}
Assume that $m=pk$ and $n=pl$ for an integer $p$. 
Then the matrix (direct sum) 
\[
   A= J_{k,l} \oplus J_{k,l} \oplus \cdots \oplus J_{k,l}
\]
where $J_{k,l}$ occurs $p$ times, 
lies in $\mc{C}_{m,n}(ke,le)$. Next, assume that $A=[a_{ij}]$ lies in $\mc{C}_{m,n}(ke,le)$. Consider $i$ minimal such that $a_{i1}=1$.  By convexity, 
\[
    a_{i1}=a_{i+1,1}= \cdots = a_{i+k-1,1}=1.
\]
Therefore, again by convexity, the $k \times l$ submatrix of $A$ containing rows $i, i+1, \ldots, i+k-1$ and columns $1, 2, \ldots, l$ equals $J_{k,l}$. The remaining entries in these $k$ rows, and in these $l$ columns, are zero. So, deleting these $k$ rows and $l$ columns, we can repeat this argument. After identifying 
$\min\{\lfloor m/k \rfloor, \lfloor n/l \rfloor\}$ such submatrices $J_k$ (that do not have any lines in common), we are left with a submatrix with either fewer rows than $k$ or fewer columns than $l$. The only possibility is that there are no rows and no columns left, otherwise it would contradict that  $A \in \mc{C}_{m,n}(ke,le)$. We conclude that $m/k=n/l$ must be an integer, as desired.
\end{proof}

Moreover, from the proof above, we see that when $m=pk$, $n=pl$ 
\[
   \mc{C}_{m,n}(ke,le)=\{P \otimes J_{k,l}: \mbox{$P$ is a permutation matrix of order $p$}\}
\]
where $\otimes$ denotes Kronecker product. Thus, $|\mc{C}_{m,n}(ke,le)|=p!$.

We now consider a special, but natural, class of convex matrices. We define a {\it Ferrers matrix} to be a $(0,1)$-matrix the positions of whose 1's form a Ferrers array. 
 Let $m_1, m_2, n_1,n_2$ be positive integers and consider a matrix 
\begin{equation}
 \label{eq:Ferrer-convex}
 A=
 \left[
\begin{array}{c|c}
 A_{11} &A_{12} \\ \hline
A_{21} &A_{22} \\ 
\end{array}
\right]
\end{equation}
where $A_{22}$, $A_{12}$, $A_{21}$, $A_{11}$ are matrices obtained by rotation of some (possibly different) Ferrers matrices $0$, $90$, $180$ and $270$ degrees counter clockwise, respectively, where $A_{11}$ has size $m_1 \times n_1$ and $A_{22}$ has size $m_2 \times n_2$, thereby determining the sizes of the other two matrices. We call $A$ a {\em Ferrers-convex matrix}. 
Let $R(A_{ij})$ be the row sum vector and $R'(A_{ij})$ be the column sum vector of $A_{ij}$ ($i,j \le 2$). Then $R'(A_{ij})$ is the conjugate of $R(A_{ij})$, but with the components in the reverse order for $A_{11}$ and $A_{21}$. Then clearly the row sum  and column sum vectors of $A$ are given by 
\[
\begin{array}{ll}\vspace{0.1cm}
   R(A)=(R(A_{11})+R(A_{12}), R(A_{21})+R(A_{22})),  \\ 
   S(A)=(R'(A_{11})+R'(A_{21}), R'(A_{21})+R'(A_{22})). 
\end{array}   
\]
\begin{example}{\rm \label{ex:Ferrers}
An example of a Ferrers-convex matrix of size $9 \times 7$ is 
\[
A=\left[
\begin{array}{cccc|ccccc}
&&&1&&&&&\\ 
&&1&1&1&&&&\\ 
&&1&1&1&1&&&\\ 
1&1&1&1&1&1&1&\\ \hline 
&1&1&1&1&1&1&1&1\\ 
&&&1&1&&&\\ 
&&&1&&&&&\\ 
\end{array}
\right].
\]
}\endproof\end{example}

\begin{lemma}
\label{lem:Ferrers-convex}
  Every Ferrers-convex matrix is convex and has unimodal row and column sum vectors. 
 \end{lemma}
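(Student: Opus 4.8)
\textbf{Proof plan for Lemma~\ref{lem:Ferrers-convex}.}

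The plan is to prove convexity directly from the block structure, working row by row and then invoking symmetry for columns. First I would fix notation: write $A$ as in (\ref{eq:Ferrer-convex}) with $A_{11}$ of size $m_1\times n_1$ in the upper-left, $A_{22}$ of size $m_2\times n_2$ in the lower-right, and $A_{12}$, $A_{21}$ the complementary blocks. The key geometric observation is that $A_{22}$, being a Ferrers matrix, has its $1$'s left- and top-justified; $A_{12}$ (a $180^\circ$ rotation of a Ferrers matrix) has its $1$'s right- and top-justified; $A_{21}$ ($270^\circ$) has its $1$'s left- and bottom-justified; and $A_{11}$ ($90^\circ$) has its $1$'s right- and bottom-justified. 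So along the vertical cut between columns $n_1$ and $n_1+1$, the block $A_{11}$ pushes its $1$'s against that cut (from the left) and $A_{12}$ pushes its $1$'s against it (from the right); similarly along the horizontal cut between rows $m_1$ and $m_1+1$, the blocks $A_{11}$ and $A_{21}$ push their $1$'s against it. This ``justification toward the center'' is what will make the consecutive-ones property hold across block boundaries.

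Next I would verify row convexity. Take any row of $A$. If the row lies entirely in the top band (rows $1,\dots,m_1$), its left part comes from a row of $A_{11}$ and its right part from a row of $A_{12}$. The $1$'s in the $A_{11}$ part form a consecutive block ending at column $n_1$ (right-justified), and the $1$'s in the $A_{12}$ part form a consecutive block starting at column $n_1+1$ (left-justified, since a $180^\circ$-rotated Ferrers matrix is left-justified in each row). Concatenating a block ending at column $n_1$ with a block starting at column $n_1+1$ again gives a consecutive block; and in the degenerate cases (one of the two parts empty in that row) the claim is immediate. The same argument with $A_{21}$, $A_{22}$ in place of $A_{11}$, $A_{12}$ handles rows in the bottom band. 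Hence every row of $A$ is convex. Applying the identical reasoning to the transpose — or equivalently noting that a $90^\circ$ rotation of $A$ is again a Ferrers-convex matrix with the four blocks cyclically permuted — gives column convexity, so $A$ is convex.

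It remains to check unimodality of the row and column sum vectors; by the transpose symmetry it suffices to treat the row sums. Write $\rho_i$ for the sum of row $i$ of $A$. For $i$ in the top band, $\rho_i = (\text{row sum of }A_{11}\text{ in row }i) + (\text{row sum of }A_{12}\text{ in row }i)$. As a Ferrers array rotated $90^\circ$, the row sums of $A_{11}$ are nondecreasing in $i$ (the ``staircase widens'' going down); as a $180^\circ$-rotated Ferrers array, the row sums of $A_{12}$ are also nondecreasing in $i$. Thus $\rho_1 \le \rho_2 \le \cdots \le \rho_{m_1}$. Symmetrically, for $i$ in the bottom band, the row sums of $A_{21}$ and of $A_{22}$ are both nonincreasing in $i$, so $\rho_{m_1+1} \ge \rho_{m_1+2} \ge \cdots \ge \rho_m$. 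Therefore $R(A)$ first weakly increases and then weakly decreases, i.e.\ it is unimodal; the only point needing a remark is the transition at the block boundary, but unimodality only requires the sequence to be nondecreasing up to some index and nonincreasing thereafter, and here it is nondecreasing on $\{1,\dots,m_1\}$ and nonincreasing on $\{m_1,\dots,m\}$ (reading $\rho_{m_1}$ as the last term of the increasing part). The same argument applied to the transpose (or the $90^\circ$ rotation) gives unimodality of $S(A)$.

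The main obstacle is purely bookkeeping: getting the four rotations lined up with the correct justification direction (top/bottom, left/right) for rows versus columns, and then confirming that the monotonicity of the partial row sums of a rotated Ferrers array has the sign one expects. Once the rotation/justification dictionary is pinned down, every step is a short direct check, so I would present the rotation conventions carefully at the outset and let the rest follow.
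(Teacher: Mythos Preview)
Your approach is essentially the paper's: show that the four blocks are justified toward the center of the array so that row (and by symmetry column) $1$'s are consecutive, then observe that the row sums of the top two blocks are each nondecreasing and those of the bottom two blocks are each nonincreasing to get unimodality. The paper's proof is exactly this, stated in two sentences by simply recording which blocks are left-/right-justified.

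That said, your bookkeeping is tangled and in places self-contradictory. The paper's convention assigns rotations $0^\circ,90^\circ,180^\circ,270^\circ$ to $A_{22},A_{12},A_{21},A_{11}$, respectively; you permute these. More seriously, you first say $A_{12}$ ``has its $1$'s right- and top-justified'' and then, two paragraphs later, argue that its row of $1$'s is ``left-justified, since a $180^\circ$-rotated Ferrers matrix is left-justified in each row.'' A $180^\circ$ rotation of a Ferrers matrix is \emph{right}-justified in each row, so that parenthetical is false, and it contradicts your earlier description. The analogous mix-up appears for $A_{21}$. What you actually need (and what the paper records) is: $A_{11},A_{21}$ are right-justified in each row, $A_{12},A_{22}$ are left-justified in each row; and $A_{11},A_{12}$ are bottom-justified in each column, $A_{21},A_{22}$ are top-justified in each column. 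With that dictionary in hand your convexity and unimodality arguments go through verbatim, and there is nothing further to add. Since you already flag the rotation/justification table as ``the main obstacle,'' the fix is simply to write that table out correctly once and then cite it, as the paper does.
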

\begin{proof}
 Consider $A$ as in (\ref{eq:Ferrer-convex}). Then the ones in each row are consecutive since the ones in $A_{11}$ and $A_{21}$ are right-justified, and the ones in $A_{12}$ and $A_{22}$ are left-justified. Similarly, the ones are consecutively in each column. Thus, $A$ is convex. Moreover, from the Ferrers property, the row sums are nondecreasing in the first $m_1$ rows, and nonincreasing in the last $m_2$ rows, so the row sum vector is unimodal. For similar reasons the column sum vector is unimodal.
\end{proof}

Thus, Ferrers-convex matrices form a large class of convex matrices obtained  from all possible partitions of four integers being the number of ones in each of the four submatrices in (\ref{eq:Ferrer-convex}). 

\begin{example}{\rm \label{ex:Ferrers2}
Let $A_n=[a_{ij}]$ be the $(0,1)$-matrix of order $n$ with 1's on the diagonal and superdiagonal, i.e., $a_{ij}=1$ when $i\le j \le i+1$ ($i <n$) and $a_{nn}=1$. For instance, 

\[
A_4=
\left[
\begin{array}{c|c|c|c}
1&1&0&0\\ \hline
0&1&1&0\\ \hline
0&0&1&1\\ \hline
0&0&0&1
\end{array}
\right].
\]
Then, for each $n$,  $A_n$ is connected, convex and unimodal, as $R(A_n)=(2, 2, \ldots, 2, 1)$ and $S(A_n)=(1, 2, 2, \ldots, 2)$. However, one can easily check that $A_n$ is not Ferrers-convex. Thus, a converse of Lemma \ref{lem:Ferrers-convex} does not hold.

Even Ferrers-convex matrices are not uniquely determined by their row and column sum, as Example 
\[\left[\begin{array}{cc}
1&0\\ 1&1\\ 0&1\end{array}\right]\mbox{ and }
\left[\begin{array}{cc}
0&1\\ 1&1\\ 1&0\end{array}\right].\]
with $R=(1,2,1)$  and  $S=(2,2)$ shows.
}\endproof\end{example}

\section{Interchanges and convex-classes}
\label{sec:int-convex}

We continue to investigate convex matrices, but now with a focus on interchanges and preservation of convexity.

Let $J_{k,l}$ as usual be the $k \times l$ all ones matrix. Let $J^1_{k,l}$ be obtained from $J_{k,l}$ by replacing each of the ones in positions $(1,1)$ and $(k,l)$ by a zero. Similarly, let $J^2_{k,l}$ be obtained from $J_{k,l}$ by replacing each of the ones in positions $(1,l)$ and $(k,1)$ by a zero. Note that an interchange in $J^1_{k,l}$ involving the first and last row and column gives $J^2_{k,l}$.

\begin{proposition}
\label{pr:convex-interchange}
 Let  $A \in \mc{C}_{m,n}$ be such that each row and column sum is at least $2$. Then  $A$ permits an interchange into another matrix $B$ in $\mc{C}_{m,n}$ if and only if $A$ has the form 
 \begin{equation}
  \label{eq:A-form}
  A= 
  \left[
  \begin{array}{ccc}
    A_{11} & A_{12} & A_{13} \\
    A_{21} & A_{22} & A_{23} \\
    A_{11} & A_{32} & A_{33} \\
  \end{array}
  \right]
  \end{equation}
  where $(i)$ $A_{22}$ equals either $J^1_{k,l}$ or $J^2_{k,l}$ for some $k,l$, and $(ii)$ the first and last row of $A_{21}$ and $A_{23}$ are zero, and $(iii)$ the first and last column of $A_{12}$ and $A_{32}$ are zero. 
 \end{proposition}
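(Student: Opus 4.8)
The plan is to prove the two directions separately, with the ``if'' direction being essentially a routine verification and the ``only if'' direction requiring the structural analysis. For the \emph{if} direction, suppose $A$ has the form in (\ref{eq:A-form}) with conditions $(i)$--$(iii)$. Then the $2\times 2$ submatrix of $A$ determined by the first and last rows of the block row containing $A_{22}$, and the first and last columns of the block column containing $A_{22}$, is exactly the ``corner'' pattern $\left[\begin{smallmatrix}1&1\\1&1\end{smallmatrix}\right]$ with two opposite corners removed (since $A_{22}=J^1_{k,l}$ or $J^2_{k,l}$); performing the interchange on these four positions turns $J^1_{k,l}$ into $J^2_{k,l}$ (or vice versa) as noted in the paragraph before the proposition. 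I would then check that the resulting matrix $B$ is still convex: in each affected row, conditions $(ii)$ guarantee that the positions immediately to the left and right (inside $A_{21}$, $A_{23}$, or the boundary) are zero, so toggling a single entry at the extreme end of the run of ones in $A_{22}$ keeps that row's ones consecutive — and symmetrically for columns using $(iii)$. One has to treat separately the case $k=2$ or $l=2$ (where ``first and last row/column'' overlap), but convexity still holds there. The row and column sums are preserved automatically by any interchange, so $B\in\mc{C}_{m,n}$.

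For the \emph{only if} direction, suppose $A\in\mc{C}_{m,n}$ admits an interchange into $B\in\mc{C}_{m,n}$. The interchange acts on some $2\times 2$ submatrix in rows $i<i'$ and columns $j<j'$; say the pattern $\left[\begin{smallmatrix}1&0\\0&1\end{smallmatrix}\right]$ is replaced by $\left[\begin{smallmatrix}0&1\\1&0\end{smallmatrix}\right]$ (the other case is symmetric). So $a_{ij}=a_{i'j'}=1$ and $a_{ij'}=a_{i'j}=0$, while in $B$ these four values are swapped. The key observation is that in row $i$ of $A$, the $1$ at column $j$ is part of a consecutive run not containing column $j'$; after the interchange, row $i$ of $B$ has a $0$ at column $j$ and a $1$ at column $j'$, and must still be convex. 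Since row sums are preserved, the run of ones in row $i$ of $A$ must have column $j$ as an \emph{endpoint} — otherwise deleting it breaks consecutiveness and no single toggle elsewhere in the row can repair it (the interchange only changes this one entry in row $i$). I would argue similarly that column $j$ of $A$ has $i$ as an endpoint of its run, row $i'$ has $j'$ as an endpoint, column $j'$ has $i'$ as an endpoint — and in $B$, the newly created $1$'s at $(i,j')$ and $(i',j)$ must \emph{extend} the existing runs, which forces: the run of ones in row $i$ of $B$ now reaches column $j'$, so every column strictly between $j$ and $j'$ in row $i'$ is a $1$ (using that the runs in rows $i$ and $i'$, together with columns $j,j'$, must align to make the middle block solid). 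Pushing this reasoning through all four lines and all intermediate lines shows that the rectangle spanned by rows $i,\dots,i'$ and columns $j,\dots,j'$ — call it $A_{22}$ — is all ones except that two opposite corners are zero, i.e.\ $A_{22}\in\{J^1_{k,l},J^2_{k,l}\}$, giving $(i)$.

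It then remains to extract $(ii)$ and $(iii)$. For $(ii)$: consider the first row of $A_{21}$, which lies in row $i$ of $A$ and in columns $1,\dots,j-1$. Since column $j$ is the left endpoint of the run of ones in row $i$ of $A$, every entry in row $i$ to the left of column $j$ is zero; this is precisely the statement that the first row of $A_{21}$ is zero. The last row of $A_{21}$ lies in row $i'$, whose run of ones has column $j$ as an endpoint (and, since the middle block forces $a_{i',j}$ to be determined, one checks it is the \emph{left} endpoint in the relevant configuration or that the entries to the left vanish regardless), hence is zero; similarly for $A_{23}$ using that column $j'$ is the right endpoint in rows $i$ and $i'$. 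Condition $(iii)$ follows by the transposed argument applied to columns $j$ and $j'$ and the blocks $A_{12},A_{32}$. I would organize the partition of $A$ into the $3\times 3$ block form so that the middle block row is exactly rows $i,\dots,i'$ and the middle block column is exactly columns $j,\dots,j'$; then conditions $(ii)$, $(iii)$ are literal restatements of ``the run of ones in each of these boundary rows/columns stays strictly within the middle block.''

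The main obstacle I anticipate is the bookkeeping in the \emph{only if} direction showing that the entire middle rectangle $A_{22}$ is forced to be $J^1_{k,l}$ or $J^2_{k,l}$: one must simultaneously use convexity of $A$ and of $B$ in every intermediate row between $i$ and $i'$ and every intermediate column between $j$ and $j'$, and the argument that ``a single toggle cannot repair a broken run unless it happens at an endpoint, and the created ones must abut existing runs'' needs to be stated carefully enough to chain across all these lines without circularity. A clean way to do this is to first establish that rows $i,i'$ and columns $j,j'$ each have the toggled position as a run endpoint, then observe that in $B$ the four runs must pairwise overlap in the interior columns/rows (else some intermediate line of $B$ is non-convex), and finally invoke convexity of $A$ on the intermediate lines to fill in all the remaining ones of $A_{22}$.
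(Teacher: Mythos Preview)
Your approach is essentially the same as the paper's, and it is correct. The paper packages your ``endpoint'' analysis into a single clean observation: since every line sum is at least $2$, an interchange between two convex matrices must shift the interval of ones in each of the four affected lines by exactly one position. From this, the runs in rows $i,i'$ of $A$ are forced to be $[j,j'-1]$ and $[j+1,j']$ respectively (and symmetrically for columns $j,j'$), which immediately gives both the boundary of $A_{22}$ and conditions $(ii)$--$(iii)$; convexity of $A$ in the intermediate rows then fills the interior of $A_{22}$.

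Two small remarks on your write-up. First, the step you flag as the ``main obstacle'' is actually short once the four boundary runs are pinned down: for any intermediate row $r$ with $i<r<i'$, column $j$ has a $1$ at $(r,j)$ (its run is $[i,i'-1]$) and column $j'$ has a $1$ at $(r,j')$ (its run is $[i+1,i']$), so row-convexity of $A$ fills in all of $[j,j']$ in row $r$. No appeal to convexity of $B$ is needed here. Second, your sentence about the last row of $A_{21}$ says row $i'$ ``has column $j$ as an endpoint'' of its run; in your chosen configuration $a_{i'j}=0$, so $j$ is not in the run at all---the run is $[j+1,j']$. The conclusion (entries left of column $j$ in row $i'$ vanish) is still correct, just for the simpler reason that the run starts at $j+1$.
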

\begin{proof}
Let $A$ be as in (\ref{eq:A-form}) with the block matrices as stated in the theorem where $A_{22}=J^1_{k,l}$.  Then we may apply an interchange in $A$  involving the first and last row and column of $A_{22}$. The resulting matrix $B$ is as $A$ except that the block $A_{22}$ is replaced by $J^2_{k,l}$. The zeros described in conditions (ii) and (iii) assure that $B$ is a convex matrix.

We shall prove the  converse. In a convex matrix the ones in every line (row or column) are consecutive, and those positions form an {\em interval}. The following observation follows from the assumption that each line sum is at least 2. 
\begin{itemize}
 \item If an interchange in a convex matrix results in another convex matrix, then in each of the four modified lines the interval is changed by shifting the  interval one position. 
\end{itemize}

Assume now that $A$ permits an interchange into another matrix $B$ in $\mc{C}_{m,n}$. We can partition $A$ as in (\ref{eq:A-form}) such that the interchange involves the first and last rows and columns of the submatrix $A_{22}$. By symmetry we may assume that $A_{22}$ has zeros on the upper right and the lower left corner positions. Then, by the observation above, all entries in the first and last row and column of $A_{22}$ are 1, except the two mentioned zeros. Convexity then implies that the remaining entries in $A_{22}$ are 1, so $A_{22}=J^1_{k,l}$. This proves the converse.
\end{proof}

Let $R=(r_1,r_2,\ldots,r_m)$ and $S=(s_1,s_2,\ldots,s_n)$.
The class ${\mathcal A}(R,S)$ is a {\it convex-class} provided every matrix in ${\mathcal A}(R,S)$ is convex, that is, ${\mathcal A}(R,S)={\mathcal C}(R,S)$.
 Consider a convex-class $\mc{A}(R,S)$ and let $A=[a_{ij}] \in \mc{A}(R,S)$. 
 Let $i \le m$ and consider $I=\{j: a_{ij}=1\}$ which we call  a  {\em horizontal interval}. Similarly, we have the {\em vertical interval } $\{i: a_{ij}=1\}$ for each $j$.  If $I,I'$ are two horizontal intervals of the form  $I=\{k, k+1, \ldots, l\}$ and $I'=\{k+1, k+2, \ldots, l+1\}$ for some $k<l$ (so, in particular,  $|I|=|I'|$), we say that $I'$ is a {\em $1$-shift} of $I$, and vice versa. We also say that $I, I'$ is a 1-shift {\em pair}. A similar notion applies to vertical intervals. All intervals are nonempty as there are no zero lines. The next result establishes very strong requirements on the structure of matrices in a convex-class, requiring $m\choose 2$ pairwise comparisons of rows and $n\choose 2$ pairwise comparisons of columns.

\begin{theorem}
\label{thm:convex-class}
 Assume that  $\mc{A}(R,S)$ is a {\em convex-class}, and let $A \in \mc{A}(R,S)$. Let $I$ and $I'$ be  horizontal intervals  associated with different rows of $A$. Then $($at least$)$ one of the following holds:
 
 $(i)$ $|I|=|I'|=1$;
 
 $(ii)$ $I \subseteq I'$ or $I' \subseteq I$;
 
 $(iii)$ $I'$ is a $1$-shift of $I$.

 \noindent
 In particular, if for some $k\ge 1$, $|I|=k$ and $|I'|\ge k+1$, then $I\subseteq I'$.
 
 Similar properties hold for the vertical intervals. Moreover, every interchange in $\mc{A}(R,S)$ either takes place in two rows $($or columns$)$ with a single $1$, or two rows $($or columns$)$ corresponding to a $1$-shift pair. 
 \end{theorem}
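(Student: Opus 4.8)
The plan is to argue entirely in terms of the horizontal and vertical intervals of a fixed $A\in\mc{A}(R,S)$, using the hypothesis that \emph{every} matrix obtainable from $A$ by an interchange is again convex. The strategy rests on a single observation, already isolated in the proof of Proposition~\ref{pr:convex-interchange}: if an interchange in a convex matrix whose line sums are all at least $2$ produces another convex matrix, then each of the four modified lines has its interval shifted by exactly one position. So I would first dispose of the degenerate case: if some line sum equals $1$, a separate short argument is needed, and in fact when $|I|=1$ or $|I'|=1$ the conclusion is alternative $(i)$ or is already covered by $(ii)$ (a singleton is contained in any interval containing its element, and if it is not, the two singletons sit in distinct columns); so the substance is when both $|I|,|I'|\ge 2$.

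For the main case, fix two rows $i\ne i'$ with intervals $I=\{a,\dots,b\}$ and $I'=\{c,\dots,d\}$, $|I|,|I'|\ge 2$. Suppose for contradiction that none of $(i)$--$(iii)$ holds; then $I\not\subseteq I'$, $I'\not\subseteq I$, and they are not a $1$-shift pair. I would split into the case where $I$ and $I'$ are disjoint and the case where they overlap properly. If they overlap but neither contains the other, then (WLOG) $a<c\le b<d$, and I would locate a $2\times 2$ submatrix on which to perform an interchange: take columns $a$ and $d$ and rows $i,i'$. Since $a\in I\setminus I'$ and $d\in I'\setminus I$, and $a\notin I'$, $d\notin I$, this submatrix is $\left[\begin{smallmatrix}1&0\\0&1\end{smallmatrix}\right]$ or $\left[\begin{smallmatrix}0&1\\1&0\end{smallmatrix}\right]$, so an interchange is possible; carrying it out moves the interval of row $i$ to include $d$ but drop $a$ — a jump of more than one position unless $I$ and $I'$ were a $1$-shift pair, contradicting the observation. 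The disjoint case is handled the same way, choosing the two "outer" columns $a$ and $d$ (say $b<c$): again one gets a $\left[\begin{smallmatrix}1&0\\0&1\end{smallmatrix}\right]$ pattern, the interchange is legal, and it produces an interval in row $i$ that is no longer contiguous (it would contain $d$ but not $b+1,\dots,c-1$), contradicting that the resulting matrix is convex. Running the same argument with the vertical intervals (equivalently, applying it to $A^T$, which also defines a convex-class) gives the column statement. The final sentence — that every interchange in $\mc{A}(R,S)$ happens in two single-$1$ lines or a $1$-shift pair — is then immediate: an interchange affects two rows and two columns; by the observation each affected interval is shifted by one, so in particular the two affected rows have equal-size intervals, and if that size is $\ge 2$ these intervals form a $1$-shift pair by the trichotomy just proved (they cannot be nested since they differ), and similarly for the columns.

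The routine bookkeeping is in checking that the chosen $2\times 2$ submatrix really has the interchangeable pattern and that the post-interchange row interval is genuinely non-contiguous or shifted by more than one; this reduces to elementary interval arithmetic once the three failure modes are written out. The one place that needs care — and I expect it to be the main obstacle — is the case $|I|\ne|I'|$ near the boundary: one must verify that "not a $1$-shift and not nested" forces the existence of an admissible interchange, i.e.\ that the two outer columns I pick really carry complementary values in the two rows. This is where the hypothesis "each line sum $\ge 2$" is used implicitly (via the observation) to rule out the possibility that shrinking an interval by one is harmless; without it a row could legitimately lose its single $1$ and the reasoning collapses, which is exactly why the $|I|=1$ case is split off and lands in alternative $(i)$. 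The concluding "In particular" clause ($|I|=k$, $|I'|\ge k+1\Rightarrow I\subseteq I'$) follows since a strict size difference rules out $(i)$ and $(iii)$, leaving only $(ii)$, and the containment must be the one making the smaller interval a subset of the larger.
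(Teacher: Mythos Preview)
Your approach is the same as the paper's: assume $(i)$ and $(ii)$ fail and exhibit an interchange that destroys row-convexity unless the two intervals form a $1$-shift pair. There is, however, a genuine gap in how you dispose of the case where exactly one of $|I|,|I'|$ equals $1$. You assert that when $|I|=1$ or $|I'|=1$ the conclusion is $(i)$ or $(ii)$, justifying this with ``if it is not, the two singletons sit in distinct columns''; but that parenthetical only applies when \emph{both} are singletons. The configuration $|I|=1$, $|I'|\ge 2$, $I\not\subseteq I'$ satisfies none of $(i)$, $(ii)$, $(iii)$, so you must still show the class fails to be a convex-class. Worse, your preferred interchange (at the two outer columns $a$ and $d$) can fail to detect this: take $I=\{c-1\}$, $I'=\{c,\dots,d\}$; the interchange at columns $c-1$ and $d$ turns row $i'$ into $\{c-1,c,\dots,d-1\}$, still an interval. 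The repair is to use the \emph{near} endpoint of the longer interval: with $a<c$, interchange at columns $a$ and $c$, which sends row $i'$ to $\{a\}\cup\{c+1,\dots,d\}$, never an interval since $a<c$ and $|I'|\ge 2$. (The case $a>d$ is symmetric, using columns $d$ and $a$.)

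For the final sentence your route through ``the observation'' is shakier than necessary, since that observation was formulated for lines of sum at least $2$ and you now need it for rows that may have a single $1$. The paper's argument is cleaner and avoids this: if $I\subseteq I'$ then no column carries a $1$ in row $i$ and a $0$ in row $i'$, so no interchange involving these two rows exists; hence any interchange forces the two row intervals out of case $(ii)$, and the trichotomy leaves only $(i)$ or $(iii)$.
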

\begin{proof} First note that the conditions (i), (ii), and (iii) imply that if $|I|=|I'|=1$ and $I\ne I'$, then $I=\{k\}$ and $I'=\{k+1\}$ for some $k$; also,  if two horizontal intervals or two vertical intervals are disjoint, then they both contain exactly one element. 

 We assume that (i) and (ii) do not hold and prove (iii) holds. Let $I=\{i, i+1, \ldots, k\}$ and $I'=\{i', i'+1, \ldots, k'\}$ where we may assume $i<i'$, since by symmetry, we may take $i \le i'$, and if $i=i'$, (ii) would be violated. Then  $k<k'$, due to (ii). Moreover, $A$ contains ones in the positions corresponding to $i$ in $I$ (that is, the row associated with $I$ and column $i$) and to $k'$ in $I'$ (the row associated with $I'$ and column $k'$). 
 Therefore  we can make an interchange in $A$ involving these two ones (as the other two positions involved contain zeros). Let $A'$ be the resulting matrix.  However, $A'$ has consecutive ones in these two rows if and only if $i'=i+1$ and $k'=k+1$, i.e., when $I'$ is a $1$-shift of $I$. Here we used that  at least one of the intervals $I$ and $I'$ contains more than one element.  Thus (iii) holds. Clearly, a similar arguments applies to column intervals. 
 
 The last statement of the theorem follows from the first part as the two rows of the interchange cannot be of type (ii). 
\end{proof}

\begin{example}{\rm \label{ex:nonconvex} Let $R=(2,3,3)$ and $S=(2,3,2,1)$. The matrix 
\[\left[\begin{array}{c|c|c|c}
1&1&&\\ \hline
1&1&1&\\ \hline
&1&1&1\end{array}\right]\]
is a convex matrix in  ${\mathcal A}(R,S)$,  but rows 1 and 3, and columns 1 and 4 do not satisfy the conditions in Theorem \ref{thm:convex-class}, and indeed an interchange replaces $A$ with a non-convex matrix.

Now suppose that $R=(2,1,2)$ and $S=(3,1)$. Then the matrix in ${\mc A}(R,S)$
\[\left[\begin{array}{cc}
1&1\\
1&0\\
1&1\end{array}\right]\]
satisfies (i), (ii), and (iii) of Theorem \ref{thm:convex-class}  but is not convex, and so ${\mc A}(R,S)$ is not a convex class.
}\hfill{$\Box$}\end{example}

 Concerning the structure of convex-classes an interesting special case is when $r_i =n$ for some $i$; we then say that row $i$ is {\em full}. Let $A \in \mc{A}(R,S)$.  Then convexity implies that the full rows are consecutive, say in rows $p, p+1, \ldots, p'$ and the remaining intervals above and below are nested:
 \[
    I_1 \subseteq I_2 \subseteq \cdots \subseteq I_{p} = \{1, 2, \ldots, n\}=\cdots = I_{p'} \supseteq
         I_{p'+1} \supseteq \cdots \supseteq I_m
 \]
 where $I_i$ is the interval associated with row $i$ ($i \le m$). 
   The previous theorem may be used to construct convex-classes, as the next example illustrates.

\begin{example}
{\rm 
Let $R=(1,4,6,4,1)$ and $S=(1,2,4,4,3,2)$, and the following convex matrix in $\mc{A}(R,S)$
 \[
 A=\left[
 \begin{array}{c|c|c|c|c|c}
0&0&{\bf 1}&0&0&0\\ \hline
0&{\bf 1}&1&1&1&0\\ \hline
1&1&1&1&1&1\\  \hline 
0&0&1&1&1&{\bf 1}\\ \hline
0&0&0&{\bf 1}&0&0
\end{array}
\right].
\]
The interval in row 4 is a 1-shift of the interval in row 2. An interchange in these two rows and columns 2 and 5 gives the convex matrix 
\[
 B=\left[
 \begin{array}{c|c|c|c|c|c}
0&0&1&0&0&0\\ \hline
0&0&1&1&1&1\\ \hline
1&1&1&1&1&1\\  \hline 
0&1&1&1&1&0\\ \hline
0&0&0&1&0&0
\end{array}
\right].
\]
The first and last rows each have only one 1 and they permit an interchange using columns 3 and 4. The ones involved in these interchanges are indicated in boldface. All these 4 matrices constructed from these interchanges are convex, and there are no other matrices in $\mc{A}(R,S)$, so this is a convex class. \endproof
}
\end{example}

We now generalize this example and construct a family of convex-classes. This is done by selecting intervals (consecutive integers) with a certain property and then embedding these as the row intervals in a matrix. 

\bigskip
\centerline{\bf Algorithm to construct a convex-class $\mc{A}(R,S)$}

\begin{itemize}
\item[1.] Choose a 1-shift pair $I_1, I'_1$.

\item[2.]  for $i=2, 3, \ldots, k$ \\
Choose a 1-shift pair $I_i, I'_i$ such that $I_i \cap I'_i \supseteq I_{i-1} \cup I'_{i-1}$.

\item[3.] Construct the matrix $A$ with $2k$ rows corresponding to these intervals: $I_1$ and $I'_1$ are the first and last row, $I_2$ and $I'_2$ are the second and second last row etc. Do this in such a way that $A$ has no zero columns. 
\end{itemize}

We obtain the following result. 

\begin{corollary}
\label{cor:convex-class-example}
The algorithm constructs a convex-class $\mc{A}(R,S)$ consisting of  $2^k$ matrices.
\end{corollary}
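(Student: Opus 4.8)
The plan is to verify two things about the output $A$ of the algorithm: first that $\mc{A}(R,S)$, where $R$ and $S$ are the row and column sum vectors of $A$, is indeed a convex-class, i.e., every matrix with these line sums is automatically convex; and second that this class has exactly $2^k$ elements. The structural hypothesis built into the algorithm is the nested chain of containments $I_1 \cup I'_1 \subseteq I_2 \cap I'_2$, $I_2 \cup I'_2 \subseteq I_3 \cap I'_3$, and so on, together with the fact that each $\{I_i, I'_i\}$ is a $1$-shift pair of intervals of common length, say $\ell_i$, with $\ell_1 < \ell_2 < \cdots < \ell_k$ (strict, since a $1$-shift pair whose union is contained in the next intersection forces the length to grow by at least one each time). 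Note also that the middle two rows $I_k, I'_k$ are the "widest" and every column of $A$ meets $I_k \cap I'_k$ except possibly the two extreme columns, so after step 3 the column supports are themselves a nested family of intervals symmetric about the center, and in particular the column sums $S$ are unimodal and the row sums $R = (\ell_1, \ell_2, \ldots, \ell_{k-1}, \ell_k, \ell_k, \ell_{k-1}, \ldots, \ell_1)$ are unimodal.

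First I would prove $\mc{A}(R,S)$ is a convex-class. Let $B = [b_{ij}] \in \mc{A}(R,S)$ be arbitrary; I must show $B$ is convex. The key point is that the row sum multiset $\{\ell_1, \ell_1\} \cup \{\ell_2,\ell_2\} \cup \cdots$ together with the total $\sum_j s_j$ leaves essentially no freedom. Work from the outside in: the two rows of $B$ with the smallest entries are forced, by the Gale--Ryser/consecutive-ones structure of the column sums, to occupy a $1$-shift pair of positions, and similarly the column sums at the two extreme columns are each $1$, so those columns are forced. One then peels off the outer "frame" of $B$ and repeats. More carefully, I would argue as in Theorem \ref{thm:convex-class} and the discussion of full rows: because $R$ and $S$ are unimodal and $S$ has the specific staircase shape produced by nested $1$-shift intervals, any $B \in \mc{A}(R,S)$ must have each of its rows be an interval — a short row can only fit inside the "window" dictated by the column sums — and hence $B$ is row convex, and by the analogous column argument column convex. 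The cleanest route is induction on $k$: strip the first and last rows and the first and last columns (whose sums and supports are forced), observe the remaining line sums are exactly those produced by the algorithm with parameter $k-1$ applied to the truncated intervals, and invoke the inductive hypothesis.

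Second, the count. Given that every $B \in \mc{A}(R,S)$ is convex, I claim $B$ is obtained from $A$ by a sequence of independent interchanges, one optional interchange per pair $\{I_i, I'_i\}$. For each $i$, rows $2k-i+1$ and $i$ of $A$ (the $i$-th pair, counting outward) form a $1$-shift pair; by Theorem \ref{thm:convex-class} the only interchange available between these two rows slides one interval by one position, and doing so yields the other labelling of the pair while keeping every other row fixed and preserving convexity (the nestedness guarantees the neighbouring rows still contain the shifted interval, so no column convexity is broken, and by construction no zero column is created). These $k$ interchanges are supported on disjoint sets of rows, hence commute and act independently, giving $2^k$ matrices. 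Conversely, any $B \in \mc{A}(R,S)$: its $i$-th outermost row pair is a $1$-shift pair of length $\ell_i$ occupying one of exactly two possible position sets (the two determined by $A$), because the column sums pin down the union of the two outermost intervals and the only convex completions are the two $1$-shifts — so $B$ is one of the $2^k$ matrices. Thus $|\mc{A}(R,S)| = 2^k$ and the class is a convex-class, as claimed.

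\textbf{Main obstacle.} The delicate step is the forcing argument in the first part: showing that unimodality of $R$ and $S$ plus the precise staircase profile coming from nested $1$-shift pairs leaves no room for a non-convex $B$. One must rule out, for instance, a row of $B$ being split into two runs of ones; this uses that the "competing" mass in the other rows and the tight column sums cannot accommodate such a split — essentially a counting argument showing the column sums would be violated. Making this rigorous in the induction (verifying that after peeling the frame the residual sums are exactly the smaller instance, with no off-by-one slippage in which columns survive) is where the real care is needed; the $2^k$ count is then a comparatively routine consequence of Theorem \ref{thm:convex-class}.
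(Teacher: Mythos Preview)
Your proposal separates the argument into two parts: (1) a direct proof that every $B \in \mc{A}(R,S)$ is convex, and (2) the $2^k$ count once convexity is known. Part (2) is essentially the paper's argument, but part (1) is where you diverge, and it is both unnecessary and incomplete.

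The peeling induction you sketch has a real gap. You propose to strip rows $1$, $2k$ and columns $1$, $n$ from an arbitrary $B \in \mc{A}(R,S)$ and claim the residual line sums match the $(k-1)$-instance. But for a $B$ not yet known to be convex, you do not know \emph{which} row carries the single $1$ in column $1$ (or column $n$); if that $1$ sits in an interior row, stripping the column changes an interior row sum and the residual no longer matches any smaller instance of the construction. Nor is it immediate that rows $1$ and $2k$ of $B$ are themselves intervals --- that is exactly what you are trying to prove --- so the phrase ``whose sums and supports are forced'' is circular. (Even granting the peel, the inner pair $I_k,I'_k$ loses its two extreme columns and becomes two \emph{identical} rows, not a $1$-shift pair, so the residual is not literally the algorithm's output at level $k-1$.) You flag this as your main obstacle but do not resolve it.

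The paper sidesteps part (1) entirely by exploiting Ryser's theorem that $\mc{A}(R,S)$ is connected under $2\times 2$ interchanges. Starting from the convex matrix $A$ the algorithm builds, observe that two rows whose intervals satisfy $I\subseteq J$ admit no interchange whatsoever (any column where the $I$-row has a $1$, the $J$-row does too). By the nesting condition $I_{i-1}\cup I'_{i-1}\subseteq I_i\cap I'_i$, every pair of rows drawn from distinct levels is nested, so the \emph{only} interchanges available in $A$ are the $k$ swaps on the $1$-shift pairs $(I_i,I'_i)$. Each swap yields a convex matrix with the same multiset of row intervals, so the same analysis applies to it. Thus the $2^k$ matrices form an interchange-closed subset of $\mc{A}(R,S)$ containing $A$; by connectivity this subset \emph{is} $\mc{A}(R,S)$. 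Convexity of every member and the count $2^k$ follow at once, with no need to analyse an abstract $B$ from its line sums.
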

\begin{proof}
First, the constructed matrix is convex: it is clearly row-convex, and the condition $I_i \cap I'_i \supseteq I_{i-1} \cup I'_{i-1}$ assures that it is also column-convex. 
Next, no interchange is possible for two rows when one interval is contained in the other. Therefore, the only possible interchanges are for each of the pairs $I_i, I'_i$ ($i\le k$). Each of these gives a new convex matrix, due to mentioned intersection condition,  and these interchanges are ``independent", so that the class consists of $2^k$ matrices.
\end{proof}

It would be of interest to characterize the  pairs  $R,S$ for which there is a unique convex matrix in 
${\mathcal A}(R,S)$, that is, for which  $|{\mathcal C}(R,S)|=1$.

\section{A Partially Ordered Set}
\label{sec:poset}

Consider the partially ordered set  $({\mc C}_{m,n},\le)$ of all $m\times n$ convex matrices with the entrywise partial 
order. Starting with a  connected matrix $C\in {\mc C}_{m,n}$, we can sequentially  change 0's to 1's, one 0 at a time,  in such a way that  we obtain $J_{m,n}$  with all intermediary  matrices  convex.
 If $C$ is not connected, then by applying this procedure to each connected component of  $C$, we can first arrive at a convex matrix of the form $P(J_{m_1,n_1},\ldots,J_{m_p,n_p})$ for some $p\times p$ permutation matrix $P$, and then sequentially  change 0's to 1's with all resulting matrices convex and arrive at  $J_{m,n}$. In a similar way, starting with any convex matrix $A\in {\mc C}_{m,n}$, we can sequentially change 1's to 0's and arrive at the zero matrix $O_{m,n}$ with all intermediary matrices convex.

It follows that maximal chains in $({\mc C}_{m,n},\le)$ have length $ mn$ (i.e., contain $(mn+1)$ matrices) 
and every $C\in {\mc C}_{m,n}$ belongs to a maximal  chain. Also
if $C_1$ and $C_2$ are in ${\mc C}_{m,n}$, then $C_2$ {\it covers } $C_1$ provided $C_1\le C_2$ and the number of 1's in $C_2$ is one more than the number of 1's in $C_1$.
If $C_1,C_2\in {\mc C}_{m,n}$, then the {\it intersection} $C_1\wedge C_2$ of $C_1$ and $C_2$ is the $m\times n$ $(0,1)$-matrix $C_1\wedge C_2$ which has 1's in exactly those places where both $C_1$ and $C_2$ have 1's.

\begin{lemma}\label{lem:wedge}
If $C_1,C_2\in {\mc C}_{m,n}$, then $C_1\wedge C_2\in {\mc C}_{m,n}$ and $C_1\wedge C_2$ is the meet $($i.e., GLB$)$  of $C_1$ and $C_2$ in ${\mc C}_{m,n}$.
\end{lemma}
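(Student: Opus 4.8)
The plan is to verify the two claims in turn: first that $C_1 \wedge C_2$ is convex, and then that it is the greatest lower bound of $C_1$ and $C_2$ in the poset $(\mc{C}_{m,n}, \le)$. The second claim is essentially formal once the first is established: among all $(0,1)$-matrices, $C_1 \wedge C_2$ is clearly a lower bound of $C_1$ and $C_2$ in the entrywise order, and any $(0,1)$-matrix $D$ with $D \le C_1$ and $D \le C_2$ satisfies $D \le C_1 \wedge C_2$ (if $D$ has a $1$ in some position, then both $C_1$ and $C_2$ do, hence so does $C_1 \wedge C_2$). So once we know $C_1 \wedge C_2 \in \mc{C}_{m,n}$, it is automatically the meet; the only real content is closure of $\mc{C}_{m,n}$ under $\wedge$.

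For the convexity of $C_1 \wedge C_2$, I would argue line by line. Fix a row $i$; let $I_1$ be the set of columns where $C_1$ has a $1$ in row $i$, and $I_2$ the analogous set for $C_2$. By convexity of $C_1$ and $C_2$, each of $I_1$ and $I_2$ is an interval of consecutive integers (possibly empty). The row-$i$ support of $C_1 \wedge C_2$ is exactly $I_1 \cap I_2$, and the intersection of two intervals of integers is again an interval (or empty) — this is the one elementary fact to state. Hence $C_1 \wedge C_2$ is row convex. The identical argument applied to columns shows it is column convex, so $C_1 \wedge C_2 \in \mc{C}_{m,n}$.

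I do not expect any genuine obstacle here; the lemma is deliberately "straightforward," and the proof is short. The only point worth being careful about is the edge case where $I_1 \cap I_2 = \emptyset$: then row $i$ of $C_1 \wedge C_2$ is a zero row, which is permitted since $\mc{C}_{m,n}$ (unlike the class of polyominoes or generalized polyominoes) allows zero lines. So no connectedness or no-zero-line hypothesis is needed, and none is claimed. After assembling these observations, I would close with the GLB verification sketched in the first paragraph.
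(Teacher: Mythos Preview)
Your proof is correct and follows essentially the same approach as the paper: both arguments establish row (and column) convexity of $C_1\wedge C_2$ by noting that the support in each line is the intersection of two intervals, hence again an interval, and then observe that the meet property in $({\mc C}_{m,n},\le)$ is immediate. Your write-up is more detailed (in particular, you spell out the GLB verification and the empty-interval edge case), but the underlying idea is identical to the paper's brief argument.
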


\begin{proof} Suppose, e.g., there are two 1's in a row of $C_1\wedge C_2$. Then in between these 1's there are only 1's in $C_1$ and $C_2$. It follows that $C_1\wedge C_2$ is a convex matrix and is the meet of $C_1$ and $C_2$.
\end{proof}
\begin{theorem}\label{th:lattice}
The partially ordered set $({\mc C}_{m,n},\le)$ is a lattice.
\end{theorem}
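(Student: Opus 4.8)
The plan is to prove that $({\mc C}_{m,n},\le)$ is a lattice by showing that every pair $C_1,C_2\in {\mc C}_{m,n}$ has a least upper bound; Lemma \ref{lem:wedge} already gives the meet, so combined with the existence of a greatest element $J_{m,n}$ (which is clearly convex), the existence of joins will follow from a standard order-theoretic fact: a poset with a top element in which every pair has a meet is automatically a lattice. Indeed, given $C_1,C_2$, the set $U$ of common upper bounds is nonempty (it contains $J_{m,n}$) and finite, so one may take the meet of all its elements; by Lemma \ref{lem:wedge} this meet lies in ${\mc C}_{m,n}$, it is $\ge C_1$ and $\ge C_2$ because each element of $U$ is, and it is below every element of $U$, hence it is the desired join $C_1\vee C_2$. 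So the first step I would take is simply to invoke Lemma \ref{lem:wedge} together with the observation that $J_{m,n}\in{\mc C}_{m,n}$ is a maximum, and then run this ``meet-of-upper-bounds'' argument.

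An alternative, more constructive route — which may be what the authors intend, since it is more in the spirit of the paper — is to exhibit the join directly. The natural candidate for $C_1\vee C_2$ is the \emph{smallest} convex matrix containing both, i.e.\ the convex closure of the entrywise maximum $C_1\vee_{\mathrm{ew}} C_2$ (fill in zeros that lie strictly between two ones in a common row or column, and iterate until stable). I would need to check three things: (a) this closure operation terminates and produces a convex matrix $C$; (b) $C\ge C_1$ and $C\ge C_2$; (c) $C$ is $\le$ any convex matrix $D$ with $D\ge C_1,C_2$. Point (b) is immediate since we only add ones. For (a), termination is clear because the number of ones strictly increases at each step and is bounded by $mn$; that the stable matrix is convex is essentially the definition of the stopping condition. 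Point (c) is the crux: if $D$ is convex and $D\ge C_1\vee_{\mathrm{ew}}C_2$, then whenever the closure procedure is forced to put a $1$ in position $(i,j)$ because of two ones flanking it in a row or column of the current matrix, those flanking ones are (inductively) already ones of $D$, and convexity of $D$ forces $d_{ij}=1$; an induction on the closure steps then gives $C\le D$.

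The main obstacle, if one goes the constructive way, is making the induction in (c) precise: one must argue that every $1$ introduced by the closure lies between two ones that are \emph{themselves} entries of $D$, which requires ordering the closure steps and carrying the invariant ``current matrix $\le D$'' through each step — not hard, but it needs care about the order of fills. I would therefore lean toward presenting the short order-theoretic argument first (meet plus top element yields a lattice) and optionally remark that the join admits the concrete description as the convex closure of the entrywise maximum. Either way the proof is brief; the real content was Lemma \ref{lem:wedge}.
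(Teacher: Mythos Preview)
Your primary argument is correct and is exactly the paper's proof: invoke Lemma~\ref{lem:wedge} for meets, observe that $J_{m,n}$ is a top element so the set of common upper bounds is nonempty, and define the join as the meet of all common upper bounds. Your alternative constructive description via the convex closure of the entrywise maximum is also sound and in fact anticipates the convex hull $\overline{A}^c$ that the paper introduces immediately after the theorem.
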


\begin{proof}
Since ${\mc C}_{m,n}$ is a finite set with a meet, then every two convex matrices $C_1$ and $C_2$ in ${\mc C}_{m,n}$ have a {\it join} $C_1\vee C_2$ (i.e., LUB), namely the meet of all the matrices $X\in {\mc C}_{m,n}$ with $C_1,C_2\le X$.
\end{proof}

Let $A$ be an arbitrary $(0,1)$-matrix of size $m \times n$. We define the {\it convex hull}  $\overline{A}^c$ of $A$ to be the meet of all the convex sets $C$ in $({\mc C}_{m,n},\le)$ such that $A\le C$.  The convex hull $\overline{A}^c$ of $A$ is obtained by replacing every 0 between two 1's with a 1, repeatedly, until one obtains a convex matrix.

\begin{example}\label{ex:closure} {\rm
A matrix $A$ and its convex closure $\overline{A}^c$ are illustrated below:
\[A=\left[\begin{array}{c|c|c|c}
1&1&&\\ \hline
1&&&1\\ \hline
&&1&1\\ \hline
&&&\end{array}\right], \quad  \overline{A}^c=\left[\begin{array}{c|c|c|c}
1&1&&\\ \hline
1&1&1&1\\ \hline
&&1&1\\ \hline
&&&\end{array}\right].
\] \endproof

}
\end{example}

The essential set of a convex matrix was defined relative to the southeast (SE) and so it is more appropriately called the {\it SE-essential set}.
It could just as well have been defined relative to the other three possible directions: the   {\it SW-essential set}, {\it NE-essential set}, and  {\it NW-essential set}. For a convex matrix $C\in {\mc C}_{m,n}$, let ${\mc E}(A)$
be the union of these four  essential sets. A specific one of these essential sets is empty exactly when there is a 1 in the oppositely directed  corner. Thus ${\mc E}(C)=\emptyset$ if and only if there is a 1 in each of the four corners of $C$, and since $C$ is convex, if and only if $C=J_{m,n}$.

We have the following theorem.

\begin{theorem}\label{th:all}
Let $C\in {\mc C}(m,n)$.  Then  $C'\in {\mc C}_{m,n}$ covers $C$ in $({\mc C}_{m,n},\le)$ if and only if 
$C'$ is obtained from $C$ by replacing the $0$ in some position of ${\mc E}(C)$ with a $1$. In particular, the number of convex matrices in ${\mc C}_{m,n}$ that cover $C$ equals $|{\mc E}(C)|$.
\end{theorem}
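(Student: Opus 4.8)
The plan is to prove both directions of the equivalence by analyzing what happens to the four essential sets (equivalently, the four ``diagrams'') when a single $0$ is changed to a $1$. The key structural fact to exploit is the characterization in Theorem \ref{lem:epi}: for a polyomino each of the four diagrams is a (rotated) Ferrers array, and the corresponding essential set consists of exactly the inner corners of that Ferrers array. Since a convex matrix is a direct sum of polyominoes (a generalized polyomino decomposes as $P(A_1,\ldots,A_k)$), it suffices to understand the situation blockwise, and then observe that changing a $0$ to a $1$ either happens inside one block or bridges two blocks; I would handle the in-block case first and reduce the bridging case to it.

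First I would prove the ``if'' direction. Suppose the $0$ in position $(i,j)$ lies in one of the four essential sets of $C$, say the SE-essential set, so $(i,j)$ is an inner corner of the SE-diagram (a Ferrers array) of $C$. I claim that replacing this $0$ by a $1$ yields a convex matrix $C'$: the positions strictly between the new $1$ and the existing $1$'s in row $i$ and in column $j$ are, by the Ferrers structure of the diagram, already $1$'s — this is precisely what it means to sit at the ``reflex corner'' facing the occupied region. (Here I would use that $(i,j)$ being the SE corner of a connected component of the SE-diagram forces $1$'s immediately to the south and to the east, which together with convexity of $C$ fills in everything between.) Hence $C' \in {\mc C}_{m,n}$, and it has exactly one more $1$ than $C$, so $C'$ covers $C$. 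The argument for the SW, NE, NW essential sets is identical after a $90^\circ$ rotation.

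For the ``only if'' direction, suppose $C'$ covers $C$, so $C' = C + E_{ij}$ for a single position $(i,j)$ with $c_{ij}=0$, and both matrices are convex. Convexity of $C'$ forces: in row $i$, the new $1$ must be adjacent to the interval of $1$'s of $C$ in that row (or row $i$ was zero in $C$, which cannot happen since $C$ has no zero rows — here I use the ``generalized polyomino'' convention, or I restrict attention to the block containing $(i,j)$), and likewise in column $j$. So $(i,j)$ is immediately adjacent to the $1$-block of its row and of its column. The crux is then to show that $(i,j)$ must be an inner corner of one of the four diagrams of $C$. Because the new $1$ must extend the row interval on (say) its left end or its right end, and similarly the column interval on its top or bottom end, there are four combinations, and each one places $(i,j)$ as the appropriate corner of the appropriate diagram: e.g.\ if it extends the row interval to the left and the column interval upward, then everything to the SE of $(i,j)$ relative to the row and column is occupied, meaning $(i,j)$ sits at the NW corner of the NW-diagram, i.e., in the NW-essential set. (One must also check it is a \emph{corner} of a \emph{connected component}, not merely in the diagram; this follows because $(i,j)$ is adjacent to $1$'s, so the component of the diagram containing it has $(i,j)$ on its boundary in both directions.) I would present this as a short case analysis on which of the four ``quadrant directions'' the new $1$ opens into.

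The main obstacle I anticipate is the bookkeeping around connectedness and the block decomposition: the clean Ferrers picture of Theorem \ref{lem:epi} is stated for polyominoes, whereas ${\mc C}_{m,n}$ contains disconnected matrices, so I need to be careful that (a) a covering move might merge two previously separate blocks, and (b) changing a $0$ in ${\mc E}(C)$ that lies ``between'' two blocks still produces a convex (though possibly now more connected) matrix. I would dispatch (a) by noting that the position filled is still adjacent to $1$'s in its row and column within the relevant blocks, so convexity is maintained regardless of whether a merge occurs, and I would dispatch (b) by observing that ${\mc E}(C)$ for a direct sum $C = \bigoplus C_t$ is captured by the diagrams of $C$ as a whole — the "between-block" reflex corners are exactly the extra essential-set positions not visible inside any single $C_t$ — so the blockwise argument, applied to the diagram of the full matrix $C$, goes through without change. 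Modulo this care, both directions are short.
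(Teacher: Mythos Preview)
Your plan reaches the same endpoint as the paper but by a much longer road. The paper's entire proof is two sentences: it asserts that a position lies in $\mathcal{E}(C)$ if and only if it carries a $0$ and has $1$'s in two orthogonally adjacent cells forming a corner (one of the four pairs S+E, N+E, N+W, S+W), and observes that these are precisely the positions where inserting a $1$ extends the row-interval and the column-interval by one unit each, hence keeps the matrix convex. There is no appeal to Theorem~\ref{lem:epi}, no Ferrers arrays, and no block decomposition.

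Your ``if'' argument in effect rediscovers this characterization --- you write that being a SE corner of the diagram ``forces $1$'s immediately to the south and to the east'' --- but you route the claim through the Ferrers-array description of the diagram of a polyomino, which is machinery the statement neither needs nor, strictly, licenses: in this section $\mathcal{C}_{m,n}$ is the full lattice of $m\times n$ convex $(0,1)$-matrices down to $O_{m,n}$, so matrices with zero lines are allowed, the decomposition $P(A_1,\dots,A_k)$ is not available in general, and your parenthetical ``which cannot happen since $C$ has no zero rows'' is false in this setting. Your ``only if'' case split --- the new $1$ extends the row interval on its left or right end and the column interval on its top or bottom end, yielding four combinations matched to the four essential sets --- \emph{is} the paper's argument; the block-merging worry you raise is moot, since convexity is checked line by line and is indifferent to connectedness. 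Drop the polyomino scaffolding and argue directly with the four immediate neighbors, and you have the paper's proof.
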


\begin{proof} A position belongs to ${\mc E}(C)$ if and only if it is occupied by a 0, and there is a 1 opposite it in the two positions S and E, N and E, N and W, or S and W. These are exactly the positions corresponding to the four essential sets.
\end{proof}

\begin{example}{\rm \label{ex:bigess}
Consider the convex matrix
\[C=\left[\begin{array}{c|c|c|c|c|c}
&&&1&1&1\\ \hline
&&1&1&1&\\ \hline
&&1&1&&\\ \hline
&1&1&1&&\\ \hline
1&1&1&&&\\ \hline
&&1&&&\end{array}\right]\rightarrow
\left[\begin{array}{c|c|c|c|c|c}
&&a&1&1&1\\ \hline
&&1&1&1&c\\ \hline
&a&1&1&c&\\ \hline
a&1&1&1&&\\ \hline
1&1&1&c&&\\ \hline
&b&1&&&\end{array}\right],\]
where $a$ denotes a position in the SE-essential set, $b$ denotes a position in the NE-essential set, and $c$ denotes a position in the NW-essential set. The SW-essential set is empty.
Then it is easy to check that these positions in ${\mc E}(C)$ are  those  and only those positions in which a 1 can be inserted to get a convex set.
}\hfill{$\Box$}\end{example}

It is natural to ask whether $({\mc C}_{m,n},\le)$ is a distributive lattice, that is, whether
\[C_1\vee (C_2\wedge C_3)=(C_1\vee C_2)\wedge (C_1\vee C_3).\]
That this need not hold is already seen with the simple example
\[C_1=\left[\begin{array}{c|c|c}
1&0&0\end{array}\right], C_2=\left[\begin{array}{c|c|c}
0&1&0\end{array}\right], C_3=\left[\begin{array}{c|c|c}
0&0&1\end{array}\right], \]
where
\[C_1\vee (C_2\wedge C_3)=\left[\begin{array}{c|c|c}
1&0&0\end{array}\right]\mbox{ and } (C_1\vee C_2)\wedge (C_1\vee C_3)=\left[\begin{array}{c|c|c}
1&1&0\end{array}\right].\]


\begin{thebibliography}{99}

\bibitem{Alpers17} A.~Alpers,  P.~Gritzmann, Reconstructing binary matrices under window constraints from their row and column sums,
{\it Fund. Inform.}, 155 (2017), no. 4, 321--340.

\bibitem{Balazs07} P.~Bal\'azs, Decomposition algorithms for reconstructing discrete sets with disjoint components, in {\it Advances in Discrete Tomography and its Applications}, G.T.~Herman and A.~Kuba eds., Birkh\H auser,  2007, 153--173. 

\bibitem{BR91}
R.A.~Brualdi, H.J.~Ryser, 
\newblock {\em Combinatorial Matrix Theory},
\newblock Encyclopedia of Mathematics, Cambridge University Press, 1991.
 
 
\bibitem{RABDD1} R.A.~Brualdi,  G.~Dahl, Matrices of zeros and ones with given line sums and a zero block. {\it Proceedings of the Workshop on Discrete Tomography and its Applications}, 83--97, {\it Electron. Notes Discrete Math.}, 20, Elsevier Sci. B. V., Amsterdam, 2005. 

\bibitem{DF} G.~Dahl, T.~Flatberg, Optimization and reconstruction of $hv$-convex
$(0, 1)$-matrices, Discrete Applied Mathematics 151 (2005), 93 --105.


\bibitem{Engbers18} J.~Engbers,  A.~Hammett, On comparability of bigrassmannian permutations< {\it Australasian J. Combinatorics}, 71 (1) (2018), 121-152.


\bibitem{EL95} K.~Eriksson,  S.~Linusson, The size of Fulton's essential set,  Electron. J. Combinatorics., 2 (1995) \# R6.

\bibitem{EL96} K.~Eriksson,  S.~Linusson, Combinatorics of Fulton's essential set,  Duke Math. J.,  85 (1996), 61--76.

\bibitem{Fulton92} W.~Fulton,  Flags, Schubert polynomials, degeneracy loci, and determinantal formulas,  Duke Math. J., 65 , 381--420 (1992).

\bibitem{Kobayashi10} M.~Kobayashi, Bijection between bigrassmanniagn permutations maximal below a permutation and its essential set, {\it Electron. J. Combinatorics.} 17 (2010), no.1, \#N27.

\bibitem{Kobayashi11} M.~Kobayashi, Enumeration of bigrassmannian permutations below a permutation in Bruhat order, {\it Order} 28 (1), (2011). 131--137.

\bibitem{Kobayashi11a} M.~Kobayashi, $3412,4231$ patterns produce singular points of essential sets, {\it Saitama Math. J.} 28 (2011), 13--23.

\bibitem{Kobayashi13} M.~Kobayashi, More combinatorics of Fulton's essential set, {\it Int. Math. Forum}, 8 (2013), 1735--1760.

\bibitem{Kuba} A.~Kuba,  E.~Balogh, Reconstruction of convex 2D discrete sets in polynomial time,
{\it Theor. Comput. Sci.}, 283 (2002), 223--242.

\bibitem{W} G.J.~Woeginger, The reconstruction of polyominoes from their orthogonal projections, Inform. Process. Lett., 77 (2001), 225--229. 


\end{thebibliography}
\end{document}